\DeclareMathOperator{\Div}{div}
\DeclareMathOperator{\sgn}{sgn}
\newtheorem{theorem}{Theorem}[section]
\newtheorem{lemma}[theorem]{Lemma}
\def\l{\langle}
\def\r{\rangle}
\def\bz{\bm{\zeta}}
\def\bn{\mathbf{n}}
\def\O{\Omega}
\def\p{\partial}
\def\LT{{L_2(\O)}}
\def\cT{\mathcal{T}}
\def\cE{\mathcal{E}}
\def\cV{\mathcal{V}}
\def\eps{\varepsilon}
\newcommand{\mce}{\mathcal{E}_h}
\newcommand{\mct}{\mathcal{T}_h}
\newcommand{\bW}{\bm W}
\newcommand{\ds}{\displaystyle}
\newcommand{\bcV}{{\boldsymbol{\mathcal{V}}}}
\newcommand{\bbp}{\mathbb{P}}
\newcommand{\bV}{\bm V}
\newcommand{\avg}[1]{\{{#1}\}}
\newcommand{\cQ}{\mathcal{Q}}
\newcommand{\bl}{\big\langle}
\newcommand{\br}{\big\rangle}
\newcommand{\nab}{\nabla}
\theoremstyle{definition}
\newtheorem{example}[theorem]{Example}
\theoremstyle{remark}
\newtheorem{remark}[theorem]{Remark}
\numberwithin{equation}{section}
\begin{document}

\title[Novel DG for a convection-dominated problem]{Convergence Analysis of Novel Discontinuous Galerkin Methods for a Convection Dominated Problem}

\author{Satyajith Bommana Boyana, Thomas Lewis, Sijing Liu and Yi Zhang}
\address{Satyajith Bommana Boyana, Department of Mathematics and Statistics\\
University Of North Carolina Greensboro\\
Greensboro, NC\\
USA}
\email{s\_bomman@uncg.edu}
\address{Thomas Lewis, Department of Mathematics and Statistics\\
University Of North Carolina Greensboro\\
Greensboro, NC\\
USA}
\email{tllewis3@uncg.edu}
\address{Sijing Liu, The Institute for Computational and Experimental Research in Mathematics\\
Brown University\\
Providence, RI\\
USA}
\email{sijing\_liu@brown.edu}
\address{Yi Zhang, Department of Mathematics and Statistics\\
University Of North Carolina Greensboro\\
Greensboro, NC\\
USA}
\email{y\_zhang7@uncg.edu}

 \keywords{Discontinuous Galerkin finite element differential calculus, dual-wind discontinuous Galerkin methods, convection-dominated problem}
 \subjclass{65N30}
 \date{\today}

\begin{abstract}
In this paper, we propose and analyze a numerically stable and convergent scheme for
a convection-diffusion-reaction equation in the convection-dominated regime. Discontinuous Galerkin (DG) methods are considered since
standard finite element methods for the convection-dominated equation cause spurious oscillations. We choose to follow a novel DG finite element differential calculus framework introduced in Feng et al. (2016) and approximate the infinite-dimensional operators in the equation with the finite-dimensional DG differential operators. Specifically, we construct the numerical method by using the dual-wind discontinuous Galerkin (DWDG) formulation for the diffusive term and the average discrete gradient operator for the convective term along with standard DG stabilization. We prove that the method converges optimally in the convection-dominated regime. Numerical results are provided to support the theoretical findings.
\end{abstract}

\maketitle

\section{Introduction}

Let $\Omega$ be a convex polygonal domain in $\mathbb{R}^2$. We consider the following convection-diffusion-reaction equation

\begin{subequations}\label{eq:cdr}
\begin{alignat}{3}
    -\eps\Delta u+\bz\cdot \nabla u+\gamma u&=f\quad &&\mbox{in}&\quad \O,\\
    u&=g\quad &&\mbox{on}&\quad \partial\O,
\end{alignat}    
\end{subequations}
where the diffusive coefficient $\eps>0$, the source term $f\in\LT$, the convective velocity $\bz\in [W^{1,\infty}(\Omega)]^2$ and the reaction coefficient $\gamma\in W^{1,{\infty}}(\Omega)$ is nonnegative. We assume 
\begin{equation}\label{eq:advassump}
    \gamma-\frac12\nabla\cdot\bz\ge\gamma_0>0
\end{equation}
for some constant $\gamma_0$ so that the problem \eqref{eq:cdr} is well-posed. 
Note that the convective term in \eqref{eq:cdr} is written in non-conservative form. It is equivalent to consider the conservative form 
\begin{subequations}\label{eq:cdr1}
\begin{alignat}{3}
    -\eps\Delta u+\nabla\cdot(\bz u)+(\gamma-\nabla\cdot\bz)u&=f\quad &&\mbox{in}&\quad \O,\\
    u&=g\quad &&\mbox{on}&\quad \partial\O. 
\end{alignat}    
\end{subequations}

The equation \eqref{eq:cdr}/\eqref{eq:cdr1} and the corresponding numerical methods were intensively studied in the literature \cite{rooscdr,guzmandginterior,leykekhman2012local,xu1999monotone,morton1995numerical,johnson2012numerical,stynes2005steady,miller1996fitted,farrell2000robust,lin2018weak} and the references therein. The difficulties of designing numerical methods to solve \eqref{eq:cdr}/\eqref{eq:cdr1} arise when one considers the convection-dominated case, namely, when $0<\varepsilon\ll 1$. In the convection-dominated regime, the solution to \eqref{eq:cdr} exhibits boundary layers near the outflow boundary. We refer to \cite{rooscdr} for more discussion about the analytic behavior of the solution to \eqref{eq:cdr}. The sharp gradients in the boundary layer pose challenges in designing robust numerical methods for \eqref{eq:cdr}. It is well known that a standard finite element method for \eqref{eq:cdr} produces spurious oscillations near the outflow boundary when $\varepsilon\le h\|\bm{\zeta}\|_\infty$, where $h$ is the mesh size of the triangulation. These oscillations then propagate into the interior of the domain where the solution is smooth and destroy the convergence of the finite element methods.

To remedy this issue, many methods were proposed to stabilize the numerical solutions to \eqref{eq:cdr}, for example, SUPG \cite{brooks1982streamline,hughes1986new}, local projection \cite{guermond1999stabilization,knobloch2010generalization,knobloch2009local}, EAFE \cite{xu1999monotone,wang1999crosswind,kim2003multigrid} and DG methods \cite{ayuso2009discontinuous,gopalakrishnan2003multilevel,fu2015analysis,brezzi2004discontinuous}. We refer to \cite{rooscdr,johnson2012numerical,morton1995numerical,farrell2000robust} and the references therein for more details about stabilization techniques. Among these methods, discontinuous Galerkin (DG) methods are favorable in many aspects. First, DG methods do not require the numerical solutions to be continuous, and, hence,  they are more suitable to capture sharp gradients in the solutions. Secondly, DG methods impose the boundary conditions weakly which prevents the boundary layers propagating into the interior of the domain. Lastly, DG methods have a natural upwind stabilization that can stabilize oscillatory behaviors of the numerical solutions \cite{ayuso2009discontinuous,leykekhman2012local}.

In this work, we consider a new type of DG methods inspired by the DG finite element differential calculus framework \cite{feng2016discontinuous}, 
to solve \eqref{eq:cdr}. Specifically, the diffusion part of the equation is discretized by the dual-wind discontinuous Galerkin (DWDG) method and the convection part is discretized by an average discrete divergence operator. 
DWDG methods were introduced for diffusion problems in \cite{lewis2014convergence} based on the DG differential Calculus framework \cite{feng2016discontinuous}. Such methods have optimal convergence properties even in the absence of a penalty term which is different from many existing DG methods. DWDG methods also have been applied to other problems \cite{lewis2023consistency,lewis2020convergence,boyana2023convergence,feng2022dual}. However, the study of the DG finite element differential calculus for convection-diffusion-reaction equations is still missing in the literature. In this paper, we extend the methods to convection-diffusion-reaction equations, with a particular focus on the convection-dominated regime. In order to apply the methods, we first consider the reduced problem \eqref{eq:cr} and approximate the divergence operator $\nabla\cdot$ with the discrete divergence operator $\overline{\Div}_h$. We show, with this choice of discrete operator, the method for the reduced problem \eqref{eq:cr} is consistent with a centered fluxes DG method \cite{di2011mathematical} for the convective term.
This is due to the fact that the discrete operator $\overline{\Div}_h$ is defined as the average of the ``left" discrete divergence operator and the ``right" discrete divergence operator. Using this equivalence, we add a standard penalty term to stabilize the numerical solution which leads to an upwind DG method. Combining the existing DWDG analysis for the diffusive equation with the aforementioned equivalence, we show that the proposed methods are optimal for the convection-diffusion-reaction equations in the sense of the following, 
\begin{equation}
          \|u-u_h\|_{h\sharp}\le\left\{\begin{array}{ll}
              O(h)\quad&\text{if diffusion-dominated},\\
              \\
              O(h^\frac32)\quad&\text{if convection-dominated},
          \end{array}\right.
\end{equation}
where $u$ is the solution to \eqref{eq:cdr}, $u_h$ is the numerical solution, and the mesh-dependent norm $\|\cdot\|_{h\sharp}$ is defined in \eqref{eq:upwnormstrongah}. We analyze the numerical methods using a coercive framework as well as an inf-sup approach. The inf-sup approach allows us to establish a stronger result which also controls the convective derivative (cf. \cite{di2011mathematical}).

The rest of the paper is organized as follows. In Section 2, we recall the results about the DG differential Calculus framework and define various discrete operators that are useful in the following sections. In Section 3, we consider the reduced problem when taking $\eps=0$. We propose the numerical approximations for the reduced problem and establish concrete error estimates. In Section 4, we propose fully discretized methods for \eqref{eq:cdr1} and justify the main convergence theorem. Finally, we provide some numerical results in Section 5 and end with some concluding remarks in Section 6. Some technical proofs are also included in Appendix A.

Throughout this paper, we use $C$
 (with or without subscripts) to denote a generic positive
 constant that is independent of any mesh
 parameter. 
  Also to avoid the proliferation of constants, we use the
   notation $A\lesssim B$ (or $A\gtrsim B$) to
  represent $A\leq \text{(constant)}B$. The notation $A\approx B$ is equivalent to
  $A\lesssim B$ and $B\lesssim A$.

\section{Notations and the DG Differential Calculus} 

In this section, we briefly introduce the DG differential Calculus framework (cf. \cite{feng2016discontinuous}) and the notations that will be used in the rest of the paper. We also provide some useful properties of the DG operators. Throughout the paper we will follow the standard notation for differential operators, function spaces, and norms that can be found, for example, in \cite{BS,Ciarlet}.

\subsection{DG Operators}
Let $W^{m,p}(\O)$ denote the set of all functions that are in $L^p(\O)$ whose weak derivatives up to order $m$ also belong to $L^p(\O)$. We denote $H^m(\O) := W^{m,2}(\O)$ when $p=2$. Let $W^{m,p}_0(\O)$ be the set of functions in $W^{m,p}(\O)$ with vanishing traces up to order $m-1$ on $\partial\O$, and let $H^m_0(\O) = W^{m,2}_0(\O).$ 

Let $\mct$ denote a locally quasi-uniform  simplicial triangulation of $\O \subset \mathbb{R}^2$ with a mesh size $h :=  \max \limits_{T \in \mct} h_T$, where $h_T$ is the diameter of the simplex $T \in \mct$. Let $\mce := \bigcup \limits_{T \in \mct} \partial T$ be the set of all edges in $\mct$ and $\mce^B := \bigcup \limits_{T \in \mct} (\partial T \cap \partial \O)$ be the set of boundary edges in $\mct$.  
Moreover, denote $\mce^I := \mce \setminus \mce^B$ as the set of interior edges in $\mct$. We now define the following piecewise Sobolev spaces 
\begin{equation*}
\begin{aligned}
    W^{m,p}(\mct) &:= \{v : v \vert_T \in W^{m,p}(T) \quad \forall\ T \in \mct \},\\
    \bW^{m,p}(\mct)&:=\{\boldsymbol{v}: \boldsymbol{v} \vert_T \in W^{m,p}(T) \times W^{m,p}(T)  \quad \forall\ T \in \mct  \}.
\end{aligned}
\end{equation*}
We then denote 
\begin{equation}
    \cV_h:=W^{1,1}(\mct)\cap C^0(\mct)\quad \text{and}\quad \bcV_h:=\cV_h \times \cV_h.    
\end{equation}
We also define the following inner products,
\begin{equation}
        (v, w)_{\mct} := \sum \limits_{T \in \mct}\int_T v  w \hspace{0.025in} dx\quad\text{and}\quad \langle v, w \rangle_{\mathcal{S}_h} := \sum \limits_{e \in \mathcal{S}_h}\int_e v  w \hspace{0.025in} ds,
\end{equation}
where $\mathcal{S}_h$ is a subset of $\cE_h$.

Define the DG space
\begin{equation}\label{eq:dgspdef}
    V_h := \{v \in \LT: v \vert_T \in \bbp_1(T) \quad \forall\ T \in \mct \}
\end{equation}
and define $\bV_h:= V_h \times V_h$. Note that $V_h \subset \cV_h$ and $\bV_h \subset \bcV_h$. For each edge $e = \partial T^+ \cap \partial T^-$ with some $T^+$ and $T^-$ in $\cT_h$, we assume the global numbering of $T^+$ is more than that of $T^-$ for simplicity. We define the jump and the average across an edge $e  \in \mce^I$ as follows:
\begin{align*}
    [v]|_e:= v^+ - v^-,\qquad \avg{v}|_e:= \frac12\big( v^+ + v^-\big)\qquad \forall \, v\in \cV_h,
\end{align*}
where $\ds v^{\pm} := v \vert_{T^{\pm}}$. If an edge $e \in \mce^B$, then define  
\begin{align*}
    [v]|_e:= v^+,\qquad \avg{v}|_e:= v^+ \qquad \forall \, v\in \cV_h.
\end{align*}
For an edge $e \in \mce^I$, set $\bn_e = (n_e^{(1)},n_e^{(2)})^t := \bn_{T^+} \vert_{e} = -\bn_{T^-} \vert_{e}$ as the unit normal vector. Given any $v \in \cV_h$, the trace operator $\mathcal{Q}_i^{\pm}$ on $e \in \mce^I$ in the direction $x_i$ $(i=1,2)$ is defined as follows :
\begin{align*}
    \mathcal{Q}_i^+(v) :=
            \begin{cases}
                v \vert_{T^+} , & n_e^{(i)} > 0\\
                v \vert_{T^-} , & n_e^{(i)} < 0\\
                \{v\}, & n_e^{(i)} = 0\\
            \end{cases}
            \hspace{.1in} \text{and} \quad
        \mathcal{Q}_i^-(v) :=
            & \begin{cases}
                v \vert_{T^-} , & n_e^{(i)} > 0\\
                v \vert_{T^+} , & n_e^{(i)} < 0\\
                \{v\}, & n_e^{(i)} = 0\\
            \end{cases}.
\end{align*}

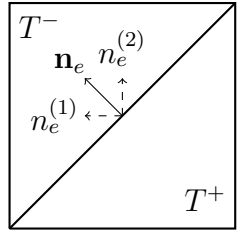
\begin{figure}[ht]
\centering
\begin{tikzpicture}
\draw[thick] plot coordinates {(0,0) (3,0) (3,3) (0,3) (0,0)};
\draw[thick] plot coordinates {(3,3) (0,0) };

\draw[->] plot coordinates {(1.5,1.5) (1,2)};
\draw[dashed,->] plot coordinates {(1.5,1.5) (1,1.5)};
\draw[dashed,->] plot coordinates {(1.5,1.5) (1.5,2)};

\node at (0.8,2.2) {$\mathbf{n}_e$};
\node at (0.6,1.5) {$n^{(1)}_e$};
\node at (1.5,2.4) {$n^{(2)}_e$};

\node at (2.6,0.4) {$T^+$};
\node at (0.4,2.7) {$T^-$};
 
\end{tikzpicture}
\caption{The operators $\mathcal{Q}_i^{\pm}$} \label{figure:qoper}
\end{figure}

See Figure \ref{figure:qoper} for an example where $\mathcal{Q}_1^+(v)=v|_{T^-}$ and $\mathcal{Q}_2^-(v)=v|_{T^+}$. Alternatively, we can define $\mathcal{Q}_i^{\pm}(v) := \avg{v} \pm \frac12\text{sgn}(n_e^{(i)})[v]$, and hence the operators $\mathcal{Q}_i^+(v)$ and $\mathcal{Q}_i^-(v)$ can be interpreted as ``right'' and ``left'' limits in the $x_i$ direction on $e \in \mce^I$. For $e = \partial T^+ \cap \partial \O \in \mce^B$, we simply set $\mathcal{Q}_i^{\pm}(v) := v^+$.

Having defined the trace operators as above, for any $v \in \cV_h$ and a given $g \in L^1(\O)$, we introduce the discrete partial derivatives  $\partial_{h,x_i}^{\pm}, \partial_{h,x_i}^{\pm,g}:\cV_h \rightarrow V_h (i = 1,2)$ as follows:
\begin{subequations}
    \begin{align}
        \bigl(\partial_{h,x_i}^\pm v,\varphi_h\bigr)_{\cT_h}
        &:= \bigl\langle \cQ_i^\pm (v) n^{(i)}, [\varphi_h] \bigr\rangle_{\mce}
        -\bigl(v, \partial_{x_i}\varphi_h \bigr)_{\cT_h}, \label{DGderivativedefinition:1}  \\
        \bigl(\partial_{h,x_i}^{\pm,g} v,\varphi_h\bigr)_{\cT_h}
        &:= \bigl\langle \cQ_i^\pm (v) n^{(i)}, [\varphi_h] \bigr\rangle_{\mce^I} + \bl g n^{(i)},\varphi_h\br_{\mce^B} 
        -\bigl(v, \partial_{x_i}\varphi_h \bigr)_{\cT_h} \label{DGderivativedefinition:2}
    \end{align}
\end{subequations}
for all $\varphi_h\in V_h$.
Accordingly, for any $v \in \cV_h$, the discrete gradient operators are defined as:
\begin{align*}
    \nab_h^\pm v = (\p_{h,x_1}^\pm v, \p_{h,x_2}^\pm v)\quad \text{and} \quad\nab_{h,g}^{\pm} v = (\p_{h,x_1}^{\pm,g} v, \p_{h,x_2}^{\pm,g} v).
\end{align*}
We define the average operators $\overline{\partial}_{h,x_i} v$, $\overline{\partial}_{h,x_i}^{g} v$, $\overline{\nab}_h v$, and$\overline{\nab}_{h,g} v$ as follows,
\begin{align*}
    &\overline{\partial}_{h,x_i} v := \frac12 (\partial_{h,x_i}^+ v + \partial_{h,x_i}^- v),  &&\overline{\partial}^g_{h,x_i} v := \frac12 (\partial_{h,x_i}^{+,g} v + \partial_{h,x_i}^{-,g} v), \\
    &\overline{\nab}_h v := \frac12 (\nab^+_ h v + \nab^-_h v),  &&\overline{\nab}_{h,g} v := \frac12 (\nab_{h,g}^+ v + \nab_{h,g}^- v).
\end{align*}
Similarly, we can also define the discrete divergence operators $\Div_h^{\pm}, \overline{\Div}_h: \bcV_h\rightarrow V_h$ as follows,
\begin{equation}\label{eq:divdef}
    \Div_h^{\pm}\mathbf{v}=\sum_{i=1}^2\partial^{\pm}_{h,x_i}v^{(i)}\quad\text{and}\quad\overline{\Div}_h\mathbf{v}=\frac12(\Div_h^{+}\mathbf{v}+\Div_h^{-}\mathbf{v}).
\end{equation}

\subsection{Preliminary Properties}
We present some preliminary properties of the DG operators defined in the previous subsection and some results that will be used in the subsequent analysis. We first need the following generalized integration by parts formula.
\begin{lemma}\label{lem:ibp}
    For any $v_h\in V_h$ and $\varphi_h\in V_h$, we have
    \begin{equation}
        (\partial_{h,x_i}^{\pm}(\zeta_i v_h), \varphi_h)_{\cT_h}=-(\partial_{h,x_i}^{\mp}(\zeta_i \varphi_h), v_h)_{\cT_h}+(\varphi_h,(\partial_{x_i}\zeta_i)v_h)_{\cT_h}+\l\zeta_i\varphi_h,v_h n^{(i)}\r_{\cE_h^B}.
    \end{equation}
\end{lemma}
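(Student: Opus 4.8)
The plan is to prove the identity by starting from the definition of $\partial_{h,x_i}^{\pm}$ applied to the product $\zeta_i v_h$, tested against $\varphi_h$, and then manipulating both the volume term and the edge term so that the roles of $v_h$ and $\varphi_h$ get swapped, which will force the trace operator to flip from $\cQ_i^{\pm}$ to $\cQ_i^{\mp}$. Concretely, by \eqref{DGderivativedefinition:1} we have
\[
(\partial_{h,x_i}^{\pm}(\zeta_i v_h), \varphi_h)_{\cT_h} = \bl \cQ_i^{\pm}(\zeta_i v_h)\, n^{(i)}, [\varphi_h]\br_{\mce} - (\zeta_i v_h, \partial_{x_i}\varphi_h)_{\cT_h}.
\]
For the volume term I would apply the elementwise product rule $\partial_{x_i}(\zeta_i \varphi_h) = (\partial_{x_i}\zeta_i)\varphi_h + \zeta_i \partial_{x_i}\varphi_h$ to rewrite $(\zeta_i v_h, \partial_{x_i}\varphi_h)_{\cT_h}$ as $(v_h, \partial_{x_i}(\zeta_i\varphi_h))_{\cT_h} - (v_h, (\partial_{x_i}\zeta_i)\varphi_h)_{\cT_h}$. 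This already produces the reaction-type term $(\varphi_h,(\partial_{x_i}\zeta_i)v_h)_{\cT_h}$ with the correct sign and leaves behind $-(v_h,\partial_{x_i}(\zeta_i\varphi_h))_{\cT_h}$, which is exactly the shape of the volume term in $\partial_{h,x_i}^{\mp}(\zeta_i\varphi_h)$ tested against $v_h$.

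The main work is then the edge terms. On each interior edge $e$ I would use the elementary algebraic identity
\[
[ab] = \avg{a}[b] + [a]\avg{b} = \cQ_i^{+}(a)\cQ_i^{-}(b)\,\text{-type splitting},
\]
more precisely the pointwise fact that $\cQ_i^{\pm}(\zeta_i v_h) = \cQ_i^{\pm}(\zeta_i)\cQ_i^{\pm}(v_h)$ holds when $\zeta_i$ is single-valued (continuous) across interior edges, together with the jump-average manipulation $\l a n^{(i)},[b]\r_e + \l \tilde a n^{(i)},[\tilde b]\r_e$ rearranging into terms of the form $\l \zeta_i b n^{(i)}, [a]\r$. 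Since $\bz\in[W^{1,\infty}(\Omega)]^2$ is continuous, $\zeta_i$ has no jump, so $\cQ_i^{\pm}(\zeta_i)=\zeta_i|_e$ on interior edges. Using $[\zeta_i v_h]=\zeta_i[v_h]$ and $\avg{\zeta_i v_h}=\zeta_i\avg{v_h}$ (on $\mce^I$) and the representation $\cQ_i^{\pm}(w)=\avg{w}\pm\tfrac12\sgn(n^{(i)})[w]$, the interior edge contribution $\bl \cQ_i^{\pm}(\zeta_i v_h) n^{(i)},[\varphi_h]\br_{\mce^I}$ should, after adding and subtracting and invoking the product-rule term above, combine with the interior part of $-\bl \cQ_i^{\mp}(\zeta_i\varphi_h)n^{(i)},[v_h]\br_{\mce^I}$ in the claimed right-hand side. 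The $\pm\to\mp$ flip appears because moving $[\varphi_h]$ onto $v_h$ and $\avg{\cdot}$ accordingly reverses which one-sided limit is selected by the sign of $n^{(i)}$.

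Finally, the boundary edges: on $e\in\mce^B$ all the $\cQ_i^{\pm}$ reduce to the single trace $v_h^+$, $\varphi_h^+$, and $[\cdot]$ is just the trace, so $\bl\cQ_i^{\pm}(\zeta_i v_h)n^{(i)},[\varphi_h]\br_{\mce^B} = \l \zeta_i v_h \varphi_h n^{(i)}\r_{\mce^B}$; this is a symmetric expression, so it matches the boundary term $\l\zeta_i\varphi_h, v_h n^{(i)}\r_{\cE_h^B}$ on the right-hand side directly (note $\partial_{h,x_i}^{\mp}$ in \eqref{DGderivativedefinition:1} also uses $[\cdot]$ on all of $\mce$ including $\mce^B$, contributing $\l\zeta_i\varphi_h v_h n^{(i)}\r_{\mce^B}$, and the two boundary terms add to give the single one stated — I would double-check the bookkeeping of signs here, since it is the most error-prone point). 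The expected main obstacle is precisely this careful tracking of interior-edge jump/average algebra to confirm the $\cQ_i^{\pm}\leftrightarrow\cQ_i^{\mp}$ swap: everything else is a direct application of the definitions and the elementwise product rule, but the sign flip on edges where $n^{(i)}$ changes sign must be verified case by case (or cleanly via the $\avg{\cdot}\pm\tfrac12\sgn(n^{(i)})[\cdot]$ formula).
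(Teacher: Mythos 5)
Your proposal follows essentially the same route as the paper's proof: start from the definition, use continuity of $\zeta_i$ to write $\cQ_i^{\pm}(\zeta_i v_h)=\zeta_i\cQ_i^{\pm}(v_h)$, apply the elementwise product rule and integration by parts to transfer the derivative and produce the reaction term, and then use the representation $\cQ_i^{\pm}(w)=\avg{w}\pm\tfrac12\sgn(n_e^{(i)})[w]$ so that the skew $\pm\tfrac12\sgn(n_e^{(i)})[v_h][\varphi_h]$ contributions from $\cQ_i^{\pm}$ and $\cQ_i^{\mp}$ cancel on interior edges, leaving only the boundary term. The paper organizes this as two dual identities (\eqref{eq:ibpo} and \eqref{eq:ibp}) that are substituted into one another, but the mathematical content is the same and your outline, once the flagged interior-edge algebra is written out, is correct.
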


\begin{proof}
    By the definitions of $\partial_{h,x_i}^{\pm}$, $\cQ_i^{\pm}$, the fact that  $\zeta_i\in W^{1,\infty}(\Omega)$,  and integration by parts, we have
    \begin{equation}\label{eq:ibpo}
        \begin{aligned}
            (\partial_{h,x_i}^{\pm}(\zeta_i v_h), \varphi_h)_{\cT_h}&=\l \cQ_i^{\pm}(\zeta_i v_h)n^{(i)},[\varphi_h]\r_{\cE_h}-(\zeta_iv_h,\partial_{x_i}\varphi_h)_{\cT_h}\\
            &=\l \zeta_i\cQ_i^{\pm}(v_h)n^{(i)},[\varphi_h]\r_{\cE_h}-(\zeta_iv_h,\partial_{x_i}\varphi_h)_{\cT_h}\\
            &=(\partial_{x_i}(\zeta_iv_h),\varphi_h)_{\cT_h}+\l \zeta_i(\cQ_i^{\pm}(v_h)-\{v_h\}),[\varphi_h]n^{(i)}\r_{\cE_h}\\
            &\quad-\l \zeta_i[v_h],\{\varphi_h\}n^{(i)}\r_{\cE^I_h}\\
            &=(\partial_{x_i}(\zeta_iv_h),\varphi_h)_{\cT_h}\pm\l \frac12\sgn(n^{(i)})[v_h]\zeta_i,[\varphi_h]n^{(i)}\r_{\cE^I_h}\\
            &\quad-\l \zeta_i[v_h],\{\varphi_h\}n^{(i)}\r_{\cE^I_h}\\
            &=(\partial_{x_i}(\zeta_iv_h),\varphi_h)_{\cT_h}+\l \zeta_i[v_h],\pm\frac12|n^{(i)}|[\varphi_h]n^{(i)}-\{\varphi_h\}n^{(i)}\r_{\cE^I_h}.
        \end{aligned}
    \end{equation}
    Use the definition of $\partial_{h,x_i}^{\pm}$ again, we have
    \begin{equation}\label{eq:ibp}
        \begin{aligned}
            (\zeta_i\varphi_h, \partial_{x_i}v_h)_{\cT_h}&=-(\partial_{h,x_i}^{\mp}(\zeta_i \varphi_h), v_h)_{\cT_h}+\l \zeta_i\cQ_i^{\mp}(\varphi_h)n^{(i)},[v_h]\r_{\cE_h}\\
            &=-(\partial_{h,x_i}^{\mp}(\zeta_i \varphi_h), v_h)_{\cT_h}+\l \zeta_i\{\varphi_h\},[v_h]n^{(i)}\r_{\cE^I_h}\\
            &\quad\mp\l\frac12|n^{(i)}|[\varphi_h]\zeta_i, [v_h]\r_{\cE^I_h}+\l \zeta_i\varphi_h, v_hn^{(i)}\r_{\cE^B_h}.
        \end{aligned}
    \end{equation}
    Note that $(\partial_{x_i}(\zeta_iv_h),\varphi_h)_{\cT_h}=(\zeta_i\varphi_h, \partial_{x_i}v_h)_{\cT_h}+(\varphi_h, (\partial_{x_i}\zeta_i) v_h)_{\cT_h}$. Insert \eqref{eq:ibp} into \eqref{eq:ibpo}, we have
    \begin{equation}
        \begin{aligned}
            (\partial_{h,x_i}^{\pm}(\zeta_i v_h), \varphi_h)_{\cT_h}=-(\partial_{h,x_i}^{\mp}(\zeta_i \varphi_h), v_h)_{\cT_h}+(\varphi_h, (\partial_{x_i}\zeta_i) v_h)_{\cT_h}+\l \zeta_i\varphi_h, v_hn^{(i)}\r_{\cE^B_h}.
        \end{aligned}
    \end{equation}
\end{proof}

\begin{remark}
    The immediate consequence of Lemma \ref{lem:ibp} is the following,
    \begin{equation}\label{eq:ibpv}
            (\Div_h^{\pm}(\bm{\zeta} v_h), \varphi_h)_{\cT_h}=-(\Div_h^{\mp}(\bm{\zeta} \varphi_h), v_h)_{\cT_h}+(\varphi_h, (\nabla\cdot\bm{\zeta}) v_h)_{\cT_h}+\l \bm{\zeta}\cdot\bn, v_h\varphi_h\r_{\cE^B_h}.
        \end{equation}    
\end{remark}

\begin{remark}
    Another consequence of the derivation \eqref{eq:ibpo} is the following,
    \begin{equation}\label{eq:consisibp}
        (\overline{\Div}_h(\bm{\zeta} v_h), \varphi_h)_{\cT_h}=(\nabla\cdot(\bm{\zeta}v_h),\varphi_h)_{\cT_h}-\l \bm{\zeta}\cdot\bn[v_h],\{\varphi_h\}\r_{\cE^I_h}.
    \end{equation}
    In fact, the following is also valid, 
    \begin{equation}\label{eq:consisibpc}
        (\overline{\Div}_h(\bm{\zeta} v), \varphi_h)_{\cT_h}=(\nabla\cdot(\bm{\zeta}v),\varphi_h)_{\cT_h}\quad\forall v\in H^1(\O).
    \end{equation}
\end{remark}

\section{The Reduced Problem and Discretization}

Our goal is to design a numerical method based on the DG differential Calculus framework for \eqref{eq:cdr} (or \eqref{eq:cdr1}). Since the DWDG method for the diffusion part is well-established \cite{lewis2014convergence}, we first consider the following reduced problem by taking $\eps=0$,
\begin{subequations}\label{eq:cr}
\begin{alignat}{3}
    \nabla\cdot(\bz u^0)+(\gamma-\nabla\cdot\bz)u^0&=f\quad &&\mbox{in}&\quad \O,\\
    u^0&=g\quad &&\mbox{on}&\quad \partial\O^-,
\end{alignat}    
\end{subequations}
where the inflow part of the boundary $\O^-$ is defined as 
\begin{align*}
    \partial\O^-:=\{x\in\partial\O: \bz(x)\cdot\bn(x)<0\}.
\end{align*}
Here $\bn$ is the outward unit normal vector of $\partial\O$ at $x$.

Let $V^0:=\{v\in\LT\ |\ \bm{\zeta}\cdot \nabla v\in\LT\}$. Then the weak form of the problem \eqref{eq:cr} is to find $u^0\in V^0$ such that
\begin{equation}\label{eq:crweak}
     a^{ar}(u^0,v)=(f,v)_\LT+\int_{\partial\O^-}|\bm{\zeta}\cdot\bn| g\ \!v\ \! dx\quad\forall v\in V^0,
\end{equation}
where the bilinear form $a^{ar}(\cdot,\cdot)$ is defined as
\begin{equation}\label{eq:contbi}
    a^{ar}(v,w)=(\nabla\cdot(\bz v),w)_\LT+((\gamma-\nabla\cdot\bz)v,w)_\LT+\int_{\partial\O^-}|\bm{\zeta}\cdot\bn| v\ \!w\ \! dx.
\end{equation}
The problem \eqref{eq:crweak} is well-posed \cite{di2011mathematical} under the assumption \eqref{eq:advassump}.

The discrete problem for \eqref{eq:crweak} is to find $u^0_h\in V_h$ such that 
\begin{equation}\label{eq:dwdg}
    a_h^{ar}(u^0_h,v_h)=(f,v_h)_\LT+\int_{\partial\O^-}|\bm{\zeta}\cdot\bn| g\ \!v_h\ \! dx\quad\forall v_h\in V_h.
\end{equation}
Here the bilinear form $a^{ar}_h(\cdot,\cdot)$ is defined as,
\begin{equation}\label{eq:discretebi}
    a^{ar}_h(v,w)=(\overline{\Div}_h(\bz v),w)_{\cT_h}+((\gamma-\nabla\cdot\bz)v,w)_\LT+\int_{\partial\O^-}|\bm{\zeta}\cdot\bn| v\ \!w\ \! ds,
\end{equation}
where $\overline{\Div}_h$ is defined in \eqref{eq:divdef}.

\subsection{Consistency}

Let $u^0$ be the solution to \eqref{eq:cr} and $u^0_h$ be the solution to \eqref{eq:dwdg}. We have, by \eqref{eq:consisibpc},
\begin{equation}\label{eq:consis}
\begin{aligned}
     a^{ar}_h(u^0,v_h)&=(\overline{\Div}_h(\bz u^0),v_h)_{\cT_h}+((\gamma-\nabla\cdot\bz)u^0,v_h)_\LT+\int_{\partial\O^-}|\bm{\zeta}\cdot\bn| u^0\ \!v_h\ \! ds\\
     &=(\nabla\cdot(\bz u^0),v_h)_{\cT_h}+((\gamma-\nabla\cdot\bz)u^0,v_h)_\LT+\int_{\partial\O^-}|\bm{\zeta}\cdot\bn| u^0\ \!v_h\ \! ds\\
     &=(\nabla\cdot(\bz u^0)+(\gamma-\nabla\cdot\bz)u^0,v_h)_\LT+\int_{\partial\O^-}|\bm{\zeta}\cdot\bn| u^0\ \!v_h\ \! ds\\
     &=(f,v_h)_\LT+\int_{\partial\O^-}|\bm{\zeta}\cdot\bn| g\ \!v_h\ \! ds\\
     &=a^{ar}_h(u^0_h,v_h)\quad\forall v_h\in V_h.
\end{aligned}
\end{equation}
Therefore we have the usual Galerkin orthogonality
\begin{equation}\label{eq:go}
    a^{ar}_h(u^0-u^0_h,v_h)=0\quad\forall v_h\in V_h.
\end{equation}

\subsection{$L_2$ Coercivity}
Define the norm
\begin{equation}\label{eq:dgl2norm}
    \|v\|_{ar}^2=\|v\|_\LT^2+\int_{\partial\O}\frac12 |\bm{\zeta}\cdot\bn| v^2 ds.
\end{equation}
\begin{lemma}
    We have
    \begin{equation}\label{eq:advcoer}
        a^{ar}_h(v_h,v_h)\ge C\|v_h\|_{ar}^2\quad\forall v_h\in V_h.
    \end{equation}
\end{lemma}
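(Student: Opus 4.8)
The plan is to test the discrete bilinear form against $v_h$ itself and reduce everything to the $L_2$-type norm $\|\cdot\|_{ar}$. First I would expand $a^{ar}_h(v_h,v_h) = (\overline{\Div}_h(\bz v_h),v_h)_{\cT_h} + ((\gamma-\nabla\cdot\bz)v_h,v_h)_\LT + \int_{\partial\O^-}|\bz\cdot\bn|\,v_h^2\,ds$. The second term is immediately bounded below by $\gamma_0\|v_h\|_\LT^2$ using the well-posedness assumption \eqref{eq:advassump}. The boundary term over $\partial\O^-$ is already nonnegative, so the real work is to show the convective term $(\overline{\Div}_h(\bz v_h),v_h)_{\cT_h}$ contributes, up to the already-controlled quantities, the remaining positive boundary mass $\int_{\partial\O}\frac12|\bz\cdot\bn|v_h^2\,ds$.

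The key step is to rewrite $(\overline{\Div}_h(\bz v_h),v_h)_{\cT_h}$ using the identity \eqref{eq:consisibp}: it equals $(\nabla\cdot(\bz v_h),v_h)_{\cT_h} - \langle \bz\cdot\bn[v_h],\{v_h\}\rangle_{\cE^I_h}$. On each element, integration by parts gives $(\nabla\cdot(\bz v_h),v_h)_T = -( \bz v_h,\nabla v_h)_T + \langle \bz\cdot\bn_T v_h, v_h\rangle_{\partial T}$, and summing over $T$, the volume term $-(\bz v_h,\nabla v_h)_{\cT_h}$ can be symmetrized: $-(\bz v_h,\nabla v_h)_{\cT_h} = -\tfrac12(\bz,\nabla(v_h^2))_{\cT_h} = \tfrac12((\nabla\cdot\bz)v_h,v_h)_{\cT_h} - \tfrac12\langle \bz\cdot\bn_T v_h, v_h\rangle_{\partial T,\,\text{summed}}$. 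Combining, the element-boundary contributions collapse (via the standard DG bookkeeping $\sum_T\langle\bz\cdot\bn_T\,\phi,\psi\rangle_{\partial T} = \langle\bz\cdot\bn[\phi\psi]\rangle$-type manipulations on interior edges plus the full boundary) to $\tfrac12\langle\bz\cdot\bn[v_h^2]\rangle_{\cE^I_h} + \tfrac12\int_{\partial\O}\bz\cdot\bn\,v_h^2\,ds$, while the interior jump-average term from \eqref{eq:consisibp} is $-\langle\bz\cdot\bn[v_h]\{v_h\}\rangle_{\cE^I_h}$. Using $[v_h^2] = [v_h]\cdot 2\{v_h\}$ on interior edges, the two interior contributions cancel exactly, leaving
\begin{equation*}
(\overline{\Div}_h(\bz v_h),v_h)_{\cT_h} = \tfrac12((\nabla\cdot\bz)v_h,v_h)_{\cT_h} + \tfrac12\int_{\partial\O}\bz\cdot\bn\,v_h^2\,ds.
\end{equation*}
Adding this to the reaction term yields $a^{ar}_h(v_h,v_h) = ((\gamma-\tfrac12\nabla\cdot\bz)v_h,v_h)_\LT + \tfrac12\int_{\partial\O}\bz\cdot\bn\,v_h^2\,ds + \int_{\partial\O^-}|\bz\cdot\bn|v_h^2\,ds$. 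Since $\bz\cdot\bn = |\bz\cdot\bn|$ on the outflow boundary and $\bz\cdot\bn = -|\bz\cdot\bn|$ on the inflow boundary, the boundary terms combine to exactly $\tfrac12\int_{\partial\O}|\bz\cdot\bn|v_h^2\,ds$, and with \eqref{eq:advassump} we get $a^{ar}_h(v_h,v_h)\ge \gamma_0\|v_h\|_\LT^2 + \tfrac12\int_{\partial\O}|\bz\cdot\bn|v_h^2\,ds \gtrsim \|v_h\|_{ar}^2$.

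The main obstacle I anticipate is the edge bookkeeping: carefully tracking the orientation of $\bn_T$ versus the fixed edge normal $\bn_e$, and verifying that the interior-edge terms coming from the DG integration by parts cancel precisely against the $-\langle\bz\cdot\bn[v_h]\{v_h\}\rangle_{\cE^I_h}$ term supplied by \eqref{eq:consisibp}. The identity $[v_h^2]=2\{v_h\}[v_h]$ is what makes this work, but one must be scrupulous about factors of $\tfrac12$ and about the fact that $\bz$ is continuous across interior edges (so it pulls out of jumps and averages cleanly); this is exactly the regularity $\bz\in[W^{1,\infty}(\O)]^2$ that was assumed. Everything else is a direct estimate.
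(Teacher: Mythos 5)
Your proof is correct and arrives at exactly the same identity the paper derives, namely
\begin{equation*}
a^{ar}_h(v_h,v_h)=\bigl((\gamma-\tfrac12\nabla\cdot\bz)v_h,v_h\bigr)_\LT+\tfrac12\int_{\partial\O}|\bz\cdot\bn|\,v_h^2\,ds,
\end{equation*}
but by a genuinely different route. The paper tests \eqref{eq:ibpv} with $\varphi_h=v_h$: since that identity swaps $\Div_h^{+}$ and $\Div_h^{-}$ under the adjoint, averaging the $+$ and $-$ versions makes the two divergence terms cancel in one line, leaving only $\tfrac12((\nabla\cdot\bz)v_h,v_h)_{\cT_h}+\tfrac12\l\bz\cdot\bn,v_h^2\r_{\cE_h^B}$ with no edge bookkeeping at all. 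You instead go through \eqref{eq:consisibp}, which converts $\overline{\Div}_h$ into the broken divergence minus a centered interior flux, and then run the classical DG coercivity computation (elementwise integration by parts, symmetrization of $-(\bz v_h,\nabla v_h)_T$, and the identity $[v_h^2]=2\{v_h\}[v_h]$ to cancel the interior-edge terms). Your route is the standard argument one would give for any centered-flux DG scheme and makes transparent why the interior edges contribute nothing; the paper's route is shorter and exploits the skew-adjoint structure of the $\pm$ operators that is specific to this differential-calculus framework. One small slip worth fixing: your opening claim that $((\gamma-\nabla\cdot\bz)v_h,v_h)_\LT\ge\gamma_0\|v_h\|_\LT^2$ does not follow from \eqref{eq:advassump}, which only controls $\gamma-\tfrac12\nabla\cdot\bz$; this sentence is harmless because your final combination correctly pairs the $+\tfrac12(\nabla\cdot\bz)$ produced by the convective term with the $-\nabla\cdot\bz$ in the reaction term, but as written it is a false intermediate statement and should be deleted.
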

\begin{proof}
    It follows from \eqref{eq:ibpv} that
    \begin{equation}
        \begin{aligned}
            a^{ar}_h(v_h,v_h)&=(\overline{\Div}_h(\bz v_h),v_h)_{\cT_h}+((\gamma-\nabla\cdot\bz)v_h,v_h)_\LT+\int_{\partial\O^-}|\bm{\zeta}\cdot\bn| v_h^2\ \! ds\\
            &=\frac12(\Div^+_h(\bz v_h),v_h)_{\cT_h}+\frac12(\Div^-_h(\bz v_h),v_h)_{\cT_h}\\
            &\quad\quad+((\gamma-\nabla\cdot\bz)v_h,v_h)_\LT+\int_{\partial\O^-}|\bm{\zeta}\cdot\bn| v_h^2\ \! ds\\
            &=\frac12(v_h, (\nabla\cdot\bm{\zeta}) v_h)_{\cT_h}+\frac12\l \bm{\zeta}\cdot\bn, v_h^2\r_{\cE^B_h}+\\
            &\quad\quad+((\gamma-\nabla\cdot\bz)v_h,v_h)_\LT+\int_{\partial\O^-}|\bm{\zeta}\cdot\bn| v_h^2\ \! ds\\
            &=((\gamma-\frac12\nabla\cdot\bz)v_h,v_h)_\LT+\int_{\partial\O}\frac12 |\bm{\zeta}\cdot\bn| v_h^2 ds.
        \end{aligned}
    \end{equation}
    The estimate \eqref{eq:advcoer} follows from the assumption \eqref{eq:advassump} immediately.
\end{proof}

\begin{remark}\label{rem:cf}
    Using \eqref{eq:consisibp}, we obtain
    \begin{equation}\label{eq:cf}
    \begin{aligned}
        a^{ar}_h(u^0_h,v_h)&=(\nabla\cdot(\bm{\zeta}u^0_h),v_h)_{\cT_h}-\l \bm{\zeta}\cdot\bn[u^0_h],\{v_h\}\r_{\cE^I_h}\\
        &\quad+((\gamma-\nabla\cdot\bz)u^0_h,v_h)_\LT+\int_{\partial\O^-}|\bm{\zeta}\cdot\bn| u^0_h\ \!v_h\ \! ds.
    \end{aligned}
    \end{equation}
    Therefore, the proposed method \eqref{eq:dwdg} is consistent with the standard centered fluxes DG method (cf. \cite{di2011mathematical}).
\end{remark}

\subsection{Stabilization}

It is well-known that the solution to \eqref{eq:cf} (or equivalently, \eqref{eq:dwdg}) exhibits spurious oscillations near the outflow boundary if no additional stabilization is added. Hence, we define the following method with a stabilization term: Find $u^0_h\in V_h$ such that

\begin{equation}\label{eq:dwdgstable}
    a^{upw}_h(u^0_h,v_h)=(f,v_h)_\LT+\int_{\partial\O^-}|\bm{\zeta}\cdot\bn| g\ \!v_h\ \! dx\quad\forall v_h\in V_h,
\end{equation}
where the bilinear form $a^{upw}_h(\cdot,\cdot)$ is defined as,
\begin{equation}\label{eq:discretebistab}
    a^{upw}_h(v,w)=a^{ar}_h(v,w)+\l \frac12|\bm{\zeta}\cdot\bn|[v],[w]\r_{\cE^I_h}.
\end{equation}

\begin{remark}
    The method \eqref{eq:dwdgstable} can be interpreted as an upwind method \cite{di2011mathematical}.
\end{remark}

It is trivial to see that \eqref{eq:dwdgstable} is a consistent method in the sense that
\begin{equation}\label{eq:stabconsis}
    a_h^{upw}(u^0-u^0_h,v_h)=0 \quad\forall v_h\in V_h.
\end{equation}
Define the norm $\|\cdot\|_{upw}$ on $V_h$ as
\begin{equation}\label{eq:upwnorm}
    \|v\|^2_{upw}=\|v\|_{ar}^2+\sum_{e\in\cE_h^I}\int_e \frac12|\bm{\zeta}\cdot\bn|[v]^2\ \!ds.
\end{equation}
\begin{lemma}
    We have, for all $v\in V_h$,
    \begin{equation}\label{eq:stabcoer}
        a^{upw}_h(v_h,v_h)\ge C\|v_h\|^2_{upw}.
    \end{equation}
\end{lemma}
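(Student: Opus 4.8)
This estimate is essentially an immediate consequence of the coercivity already established for $a^{ar}_h$ together with the definitions of $a^{upw}_h$ and the norm $\|\cdot\|_{upw}$. The plan is to simply unwind the definition \eqref{eq:discretebistab} of $a^{upw}_h$, apply \eqref{eq:advcoer} to the $a^{ar}_h(v_h,v_h)$ contribution, and observe that the remaining penalty term is \emph{exactly} the extra term appearing in the definition \eqref{eq:upwnorm} of $\|\cdot\|_{upw}$ compared with \eqref{eq:dgl2norm}.

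Concretely, I would first write
\[
  a^{upw}_h(v_h,v_h)=a^{ar}_h(v_h,v_h)+\sum_{e\in\cE_h^I}\int_e \tfrac12|\bm{\zeta}\cdot\bn|[v_h]^2\,ds .
\]
From the proof of the previous lemma (or directly from \eqref{eq:advcoer}) we have $a^{ar}_h(v_h,v_h)\ge C\|v_h\|_{ar}^2$ with $C$ depending only on $\gamma_0$ through \eqref{eq:advassump}. Substituting this lower bound and recalling \eqref{eq:upwnorm}, namely $\|v_h\|_{upw}^2=\|v_h\|_{ar}^2+\sum_{e\in\cE_h^I}\int_e \tfrac12|\bm{\zeta}\cdot\bn|[v_h]^2\,ds$, yields
\[
  a^{upw}_h(v_h,v_h)\ge C\|v_h\|_{ar}^2+\sum_{e\in\cE_h^I}\int_e \tfrac12|\bm{\zeta}\cdot\bn|[v_h]^2\,ds
  \ge \min\{C,1\}\,\|v_h\|_{upw}^2 .
\]
Relabeling $\min\{C,1\}$ as $C$ gives \eqref{eq:stabcoer}.

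There is no real obstacle here; the only point worth noting is that the penalty term added in \eqref{eq:discretebistab} is precisely the same quantity that was added to the norm in \eqref{eq:upwnorm}, so no Cauchy--Schwarz, trace, or inverse inequality is needed, and the constant is inherited unchanged (up to the harmless $\min$ with $1$) from the coercivity of $a^{ar}_h$. If one prefers, the identity $a^{ar}_h(v_h,v_h)=((\gamma-\tfrac12\nabla\cdot\bz)v_h,v_h)_\LT+\int_{\partial\O}\tfrac12|\bm{\zeta}\cdot\bn|v_h^2\,ds$ derived in the proof of the preceding lemma can be quoted directly, making the chain of inequalities above fully explicit.
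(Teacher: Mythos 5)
Your proposal is correct and is exactly the argument the paper intends: the paper's proof simply states that the coercivity follows immediately from \eqref{eq:advcoer}, \eqref{eq:discretebistab}, and \eqref{eq:upwnorm}, and your write-up just makes the (trivial) combination with the constant $\min\{C,1\}$ explicit. No differences in approach; nothing is missing.
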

\begin{proof}
    The coercivity follows from \eqref{eq:advcoer}, \eqref{eq:discretebistab}, and \eqref{eq:upwnorm} immediately.
\end{proof}

\subsection{Convergence Analysis}

We would like to establish the error estimates of the stabilized method \eqref{eq:dwdgstable}. Note that \eqref{eq:dwdgstable} is well-posed due to the discrete coercivity \eqref{eq:stabcoer}. We define a stronger norm on $V^0+V_h$,
\begin{equation}\label{eq:upwnorm1}
    \|v\|^2_{upw,*}=\|v\|^2_{upw}+\sum_{T\in\cT_h}\|v\|^2_{L_2(\partial T)}.
\end{equation}
It can be shown \cite{di2011mathematical} that for all $v\in V^0$ and $w_h\in V_h$, we have
\begin{equation}\label{eq:stabbound}
    a_h^{upw}(v-\pi_h v,w_h)\le C\|v-\pi_h v\|_{upw,*}\|w_h\|_{upw},   
\end{equation}
where $\pi_h: V^0\rightarrow V_h$ is the $L_2$-orthogonal projection. 
Combining \eqref{eq:stabconsis}, \eqref{eq:stabcoer}, and \eqref{eq:stabbound}, we conclude
\begin{equation}\label{eq:abesti}
    \|u^0-u^0_h\|_{upw}\le C\|u^0-\pi_hu^0\|_{upw,*}.
\end{equation}
By standard projection error estimates, we have (cf. \cite{di2011mathematical}),
\begin{theorem}\label{thm:upwesti}
    Let $u^0$ be the solution to \eqref{eq:crweak} and $u^0_h$ be the solution to \eqref{eq:dwdgstable}. Assume $u^0\in H^2(\O)$ and then we have
    \begin{equation}
        \|u^0-u^0_h\|_{upw}\le Ch^{\frac32}\|u^0\|_{H^2(\O)}.
    \end{equation}
\end{theorem}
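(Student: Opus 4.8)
The plan is to derive the estimate directly from the abstract convergence bound \eqref{eq:abesti}, namely $\|u^0-u^0_h\|_{upw}\le C\|u^0-\pi_h u^0\|_{upw,*}$, by bounding the right-hand side through standard approximation properties of the $L_2$-orthogonal projection $\pi_h$ onto the piecewise-linear space $V_h$. So the entire argument reduces to estimating $\|u^0-\pi_h u^0\|_{upw,*}$ term by term, recalling from \eqref{eq:upwnorm1}, \eqref{eq:upwnorm}, and \eqref{eq:dgl2norm} that
\begin{equation*}
\|v\|^2_{upw,*}=\|v\|^2_{\LT}+\int_{\partial\O}\tfrac12|\bz\cdot\bn|v^2\,ds+\sum_{e\in\cE_h^I}\int_e\tfrac12|\bz\cdot\bn|[v]^2\,ds+\sum_{T\in\cT_h}\|v\|^2_{L_2(\partial T)}.
\end{equation*}

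First I would quote the classical interpolation/projection estimates for $\pi_h$ on a locally quasi-uniform simplicial mesh: for $u^0\in H^2(\O)$ one has $\|u^0-\pi_h u^0\|_{L_2(T)}\lesssim h_T^2\|u^0\|_{H^2(T)}$ and, via a scaled trace inequality combined with the $H^2$ interpolation bound, $\|u^0-\pi_h u^0\|_{L_2(\partial T)}\lesssim h_T^{3/2}\|u^0\|_{H^2(T)}$ for every $T\in\cT_h$. The volume term then contributes $O(h^2)$, which is of higher order. For the boundary-edge and interior-jump terms, since $\bz\in[W^{1,\infty}(\O)]^2$ we have $|\bz\cdot\bn|\le\|\bz\|_{L_\infty}$, so those terms are controlled by $\|\bz\|_{L_\infty}\sum_{T}\|u^0-\pi_h u^0\|^2_{L_2(\partial T)}$ (for the jump term, $[u^0-\pi_h u^0]$ on an interior edge is bounded by the sum of the two one-sided traces, and $u^0$ itself is continuous across interior edges so only the projection's trace jump matters — either way each jump is dominated by the two adjacent elementwise boundary $L_2$-norms). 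The last term in the norm is exactly $\sum_T\|u^0-\pi_h u^0\|^2_{L_2(\partial T)}$. Summing over $T$ the bound $h_T^{3}\|u^0\|^2_{H^2(T)}\le h^{3}\|u^0\|^2_{H^2(T)}$ and using $\sum_T\|u^0\|^2_{H^2(T)}=\|u^0\|^2_{H^2(\O)}$ gives $\|u^0-\pi_h u^0\|^2_{upw,*}\lesssim h^{3}\|u^0\|^2_{H^2(\O)}$, hence $\|u^0-\pi_h u^0\|_{upw,*}\lesssim h^{3/2}\|u^0\|_{H^2(\O)}$. Combining with \eqref{eq:abesti} yields $\|u^0-u^0_h\|_{upw}\le Ch^{3/2}\|u^0\|_{H^2(\O)}$, which is the claim.

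The argument is essentially routine; the only points requiring minor care are: (i) justifying the trace estimate $\|u^0-\pi_h u^0\|_{L_2(\partial T)}\lesssim h_T^{3/2}\|u^0\|_{H^2(T)}$, which follows from the standard scaled trace inequality $\|w\|^2_{L_2(\partial T)}\lesssim h_T^{-1}\|w\|^2_{L_2(T)}+h_T\|\nabla w\|^2_{L_2(T)}$ applied to $w=u^0-\pi_h u^0$ together with $H^1$- and $L_2$-approximation of $\pi_h$; (ii) absorbing the $O(h^2)$ volume term into the dominant $O(h^{3/2})$ bound, which is trivial for $h\lesssim 1$; and (iii) making sure the hidden constants depend only on the shape-regularity of $\cT_h$, on $\|\bz\|_{L_\infty}$, and on the constant $C$ from \eqref{eq:abesti}, none of which involve $\eps$ or $h$. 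There is no genuine obstacle here — the substantive analytic work has already been done in establishing \eqref{eq:stabbound} and \eqref{eq:abesti}; the theorem is the clean corollary obtained by inserting standard approximation theory.
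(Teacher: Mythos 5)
Your proposal is correct and follows exactly the route the paper takes: it invokes the abstract bound \eqref{eq:abesti} and then controls $\|u^0-\pi_h u^0\|_{upw,*}$ by standard $L_2$-projection estimates together with the scaled trace inequality, the edge terms giving the dominant $O(h^{3/2})$ contribution. The paper simply leaves these details to ``standard projection error estimates'' with a citation, so your write-up is just a fleshed-out version of the same argument.
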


\subsection{Convergence Analysis Based On an inf-sup Condition}\label{sec:infsupconv}

We could obtain a similar error estimate with a stronger norm which involves the gradient in the direction of $\bm{\zeta}$. Define 
\begin{equation}\label{eq:upwnormstrong}
    \|v\|^2_{upw\sharp}=\|v\|^2_{upw}+\sum_{T\in\cT_h}h_T\|\bm{\zeta}\cdot\nabla v\|^2_{L_2(T)}.
\end{equation}
We first need the following inf-sup condition (cf. \cite{di2011mathematical}).
\begin{lemma}\label{lem:infsupconv}
    We have
    \begin{equation}\label{eq:disinfsup}
        \sup_{w_h\in V_h\setminus\{0\}}\frac{a_h^{upw}(v_h,w_h)}{\|w_h\|_{upw\sharp}}\ge C\|v_h\|_{upw\sharp}\quad \forall v_h\in V_h,
    \end{equation}
    where the constant $C$ is independent of $\bm{\zeta}$ and $h$.
\end{lemma}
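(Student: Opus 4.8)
The plan is to prove \eqref{eq:disinfsup} by the classical device of augmenting the natural test function $v_h$ with a discrete streamline-derivative correction. Given $v_h\in V_h$, define on each $T\in\mct$ the function $s_h|_T := h_T\,\Pi_T(\bz\cdot\nabla v_h)$, where $\Pi_T$ is the local $L_2(T)$-orthogonal projection onto $\bbp_1(T)$ (projection onto $\bbp_0(T)$ works equally well), and set $w_h := v_h + \delta\, s_h$ for a fixed $\delta\in(0,1)$ to be chosen small; note $w_h\in V_h$ since $s_h$ is piecewise polynomial and discontinuities are allowed. It then suffices to show $a_h^{upw}(v_h,w_h)\gtrsim \|v_h\|^2_{upw\sharp}$ and $\|w_h\|_{upw\sharp}\lesssim\|v_h\|_{upw\sharp}$, after which \eqref{eq:disinfsup} follows by using $w_h/\|w_h\|_{upw\sharp}$ as competitor in the supremum.

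For the first bound, split $a_h^{upw}(v_h,w_h)=a_h^{upw}(v_h,v_h)+\delta\,a_h^{upw}(v_h,s_h)$. The coercivity estimate \eqref{eq:stabcoer} handles the first piece, giving $C\|v_h\|^2_{upw}$, which already controls every term of $\|v_h\|^2_{upw\sharp}$ except $\sum_{T}h_T\|\bz\cdot\nabla v_h\|^2_{L_2(T)}$. For the second piece, I would rewrite the convective contribution using \eqref{eq:consisibp}, namely $(\overline{\Div}_h(\bz v_h),s_h)_{\mct}=(\nabla\cdot(\bz v_h),s_h)_{\mct}-\langle\bz\cdot\bn[v_h],\{s_h\}\rangle_{\cE_h^I}$, and then use $\nabla\cdot(\bz v_h)=\bz\cdot\nabla v_h+(\nabla\cdot\bz)v_h$. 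The leading term $(\bz\cdot\nabla v_h,\, h_T\Pi_T(\bz\cdot\nabla v_h))_T = h_T\|\Pi_T(\bz\cdot\nabla v_h)\|^2_{L_2(T)}$ is, up to the commutator estimate $\|\bz\cdot\nabla v_h-\Pi_T(\bz\cdot\nabla v_h)\|_{L_2(T)}\lesssim h_T\|\nabla\bz\|_{\infty}\|\nabla v_h\|_{L_2(T)}\lesssim\|\bz\|_{W^{1,\infty}}\|v_h\|_{L_2(T)}$ (using $\nabla^2 v_h|_T=0$ and an inverse inequality), equivalent to $h_T\|\bz\cdot\nabla v_h\|^2_{L_2(T)}$ modulo a term of order $h\|v_h\|^2_{upw}$. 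Every remaining term — $((\nabla\cdot\bz)v_h,s_h)$, the reaction term $((\gamma-\nabla\cdot\bz)v_h,s_h)$, the inflow-boundary term, the centered-flux jump term, and the penalty jump term $\langle\tfrac12|\bz\cdot\bn|[v_h],[s_h]\rangle_{\cE_h^I}$ — is bounded by Cauchy–Schwarz together with $\|s_h\|_{L_2(T)}\le h_T\|\bz\cdot\nabla v_h\|_{L_2(T)}$ and, after a trace inequality, $\||\bz\cdot\bn|^{1/2}[s_h]\|_{L_2(e)}\lesssim\|\bz\|_\infty^{1/2}h_T^{-1/2}\|s_h\|_{L_2(T)}\lesssim\|\bz\|_\infty^{1/2}h_T^{1/2}\|\bz\cdot\nabla v_h\|_{L_2(T)}$; each such term is thus controlled by $\|v_h\|_{upw}$ times $h_T^{1/2}\big(h_T^{1/2}\|\bz\cdot\nabla v_h\|_{L_2(T)}\big)$, and Young's inequality (using the spare factor $h_T^{1/2}\lesssim h^{1/2}$, or simply the smallness of $\delta$) absorbs half of $\delta\sum_T h_T\|\bz\cdot\nabla v_h\|^2_{L_2(T)}$ and a multiple of $\|v_h\|^2_{upw}$. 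Choosing $\delta$ small enough yields $a_h^{upw}(v_h,w_h)\ge C\|v_h\|^2_{upw\sharp}$.

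Finally, $\|w_h\|_{upw\sharp}\le\|v_h\|_{upw\sharp}+\delta\|s_h\|_{upw\sharp}$, and $\|s_h\|_{upw\sharp}\lesssim\|v_h\|_{upw\sharp}$ follows elementwise: $\|s_h\|_{L_2}$ and the weighted boundary/jump seminorms of $s_h$ are controlled by $h_T\|\bz\cdot\nabla v_h\|_{L_2(T)}$ via trace inequalities, while $\sum_T h_T\|\bz\cdot\nabla s_h\|^2_{L_2(T)}\lesssim\sum_T h_T\, h_T^{-2}\|s_h\|^2_{L_2(T)}\lesssim\sum_T h_T\|\bz\cdot\nabla v_h\|^2_{L_2(T)}$ by an inverse inequality — all of these lie inside $\|v_h\|^2_{upw\sharp}$ (the $L_2$ and boundary contributions even with a spare factor of $h$). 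The main obstacle is the one already flagged: because $\bz$ is only $W^{1,\infty}$ and not piecewise polynomial, $\bz\cdot\nabla v_h\notin V_h$, so the correction must be built from a local projection and the resulting projection/commutator errors must be shown to be genuinely lower order; simultaneously one must verify that after all the Cauchy–Schwarz–Young absorptions the coefficient of the streamline term stays strictly positive and that no constant degenerates as $h\to0$ or as $\bz$ varies, the latter relying on \eqref{eq:advassump} and a harmless normalization of $\bz$, exactly as in \cite{di2011mathematical}.
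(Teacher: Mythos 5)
Your proposal is correct and takes essentially the same route as the paper: the paper proves this lemma by appealing to \cite[Lemma 2.35]{di2011mathematical} via Remark \ref{rem:cf}, and the detailed version of that argument given in Appendix \ref{apx:infsup} for the full form $a_h$ uses exactly your streamline test function (there written $h_T\l\bm{\zeta}\r_T\cdot\nabla v_h$, which for $\bbp_1$ elements coincides with your $h_T\Pi_T(\bz\cdot\nabla v_h)$ when $\Pi_T$ projects onto constants), the identity \eqref{eq:consisibp}, the same commutator bound $\|\bm{\zeta}-\l\bm{\zeta}\r_T\|_{L^\infty(T)}\lesssim h_T$, and the same trace/inverse-inequality absorptions. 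The only cosmetic difference is that you test with $v_h+\delta s_h$ and tune $\delta$, whereas the paper tests with the streamline correction alone and closes the argument by combining the coercivity bound $\|v_h\|_{upw}^2\lesssim\mathcal{S}\|v_h\|_{upw\sharp}$ with Young's inequality.
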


\begin{proof}
    The proof is identical to that of \cite[Lemma 2.35]{di2011mathematical} due to Remark \ref{rem:cf}.
\end{proof}

To formulate an abstract error estimate, we define the following norm on $V^0+V_h$,

\begin{equation}\label{eq:upwnormss}
    \|v\|^2_{upw\sharp,*}=\|v\|^2_{upw\sharp}+\sum_{T\in\cT_h}(h_T^{-1}\|v\|^2_{L_2(T)}+\|v\|^2_{L_2(\partial T)}).
\end{equation}
Similar to \eqref{eq:stabbound}, we have (cf. \cite{di2011mathematical}),
\begin{equation}\label{eq:stabboundss}
    a_h^{upw}(v-\pi_h v,w_h)\le C\|v-\pi_h v\|_{upw\sharp,*}\|w_h\|_{upw\sharp}
\end{equation}
for all $v\in V^0$ and $w_h\in V_h$.

The immediate consequence of \eqref{eq:disinfsup} and \eqref{eq:stabboundss} is the following lemma.
\begin{lemma}\label{lem:conestiuwss}
Let $u^0$ be the solution to \eqref{eq:crweak} and $u^0_h$ be the solution to \eqref{eq:dwdgstable}. Assume $u^0\in H^2(\O)$ and then we have
     \begin{equation}\label{eq:conestiuwss}
    \|u^0-u^0_h\|_{upw\sharp}\le C\|u^0-\pi_hu^0\|_{upw\sharp,*}\le Ch^{\frac32}\|u^0\|_{H^2(\O)}.
\end{equation}
 \end{lemma}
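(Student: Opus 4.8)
The proof of Lemma~\ref{lem:conestiuwss} is a standard consequence of the abstract machinery assembled in this subsection, so the plan is essentially to chain together the inf-sup condition \eqref{eq:disinfsup}, the consistency \eqref{eq:stabconsis}, and the boundedness \eqref{eq:stabboundss}, and then to close with a polynomial approximation estimate for $u^0-\pi_hu^0$ in the $\|\cdot\|_{upw\sharp,*}$ norm.

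\textbf{First step: the abstract error bound.} I would set $e_h := \pi_h u^0 - u^0_h \in V_h$ and apply \eqref{eq:disinfsup} to $e_h$. For any $w_h \in V_h\setminus\{0\}$, write $a_h^{upw}(e_h,w_h) = a_h^{upw}(\pi_h u^0 - u^0, w_h) + a_h^{upw}(u^0 - u^0_h, w_h)$. The second term vanishes by the consistency identity \eqref{eq:stabconsis}, and the first term is bounded by $C\|u^0 - \pi_h u^0\|_{upw\sharp,*}\|w_h\|_{upw\sharp}$ via \eqref{eq:stabboundss}. Dividing by $\|w_h\|_{upw\sharp}$ and taking the supremum over $w_h$ gives $\|e_h\|_{upw\sharp} \le C\|u^0-\pi_h u^0\|_{upw\sharp,*}$. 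A triangle inequality $\|u^0 - u^0_h\|_{upw\sharp} \le \|u^0 - \pi_h u^0\|_{upw\sharp} + \|e_h\|_{upw\sharp}$, together with the trivial bound $\|u^0-\pi_h u^0\|_{upw\sharp} \le \|u^0-\pi_h u^0\|_{upw\sharp,*}$, yields the first inequality in \eqref{eq:conestiuwss}.

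\textbf{Second step: the approximation estimate.} It remains to show $\|u^0-\pi_h u^0\|_{upw\sharp,*} \le Ch^{3/2}\|u^0\|_{H^2(\O)}$. I would bound each piece of the norm \eqref{eq:upwnormss} separately using standard $L_2$-orthogonal projection error estimates on shape-regular simplices together with the trace inequality $\|v\|_{L_2(\partial T)}^2 \lesssim h_T^{-1}\|v\|_{L_2(T)}^2 + h_T\|\nabla v\|_{L_2(T)}^2$. For $w := u^0 - \pi_h u^0$: the volume term gives $h_T^{-1}\|w\|_{L_2(T)}^2 \lesssim h_T^{-1}(h_T^2\|u^0\|_{H^2(T)})^2 = h_T^3\|u^0\|_{H^2(T)}^2$; the $\|w\|_{L_2(\partial T)}$ term and the $\|\bz\cdot\nabla w\|_{L_2(T)}$ term (using $\bz\in[W^{1,\infty}]^2$) are handled the same way, each contributing $O(h_T^3)$ after summation in the squared norm; the boundary integrals $|\bz\cdot\bn|$-weighted terms reduce to edge $L_2$-norms of $w$ and are again $O(h^3)$. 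Summing over $T\in\cT_h$ and taking square roots produces the $h^{3/2}$ rate. The $\|w\|_{L_2(\O)}^2$ contribution inside $\|\cdot\|_{upw}$ is even higher order ($O(h^4)$) and is absorbed.

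\textbf{Expected obstacle.} There is no deep obstacle here; the lemma is explicitly flagged as an ``immediate consequence'' and the analysis mirrors \cite[Lemma~2.35]{di2011mathematical}. The only point requiring a little care is bookkeeping the mesh-dependent weights: one must verify that the scaling of the penalty/boundary weights (the $h_T$ factor in front of $\|\bz\cdot\nabla v\|_{L_2(T)}^2$ and the $h_T^{-1}$ factor in front of $\|v\|_{L_2(T)}^2$ in \eqref{eq:upwnormss}) is exactly what makes all terms balance at the $h^{3/2}$ rate, and that the constants stay independent of $\eps$ and $h$ as claimed. This is routine once the trace and projection inequalities are invoked with the correct powers of $h_T$.
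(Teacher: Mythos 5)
Your proof is correct and follows essentially the same route as the paper: the inf-sup condition \eqref{eq:disinfsup} applied to $\pi_h u^0-u^0_h$, combined with the consistency \eqref{eq:stabconsis} and the boundedness \eqref{eq:stabboundss}, followed by standard projection and trace estimates for each weighted term of $\|\cdot\|_{upw\sharp,*}$. No substantive differences from the paper's argument.
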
 

 \begin{proof}
     It follows from \eqref{eq:disinfsup}, \eqref{eq:stabboundss}, and \eqref{eq:stabconsis} that,
     \begin{equation}\label{eq:uwinfsupesti}
     \begin{aligned}
             \|\pi_hu^0-u^0_h\|_{upw\sharp}&\le C\sup_{w_h\in V_h\setminus\{0\}}\frac{a_h^{upw}(\pi_hu^0-u^0_h,w_h)}{\|w_h\|_{upw\sharp}}\\
             &=C\sup_{w_h\in V_h\setminus\{0\}}\frac{a_h^{upw}(\pi_hu^0-u^0,w_h)}{\|w_h\|_{upw\sharp}}\\
             &\le C\|u^0-\pi_hu^0\|_{upw\sharp,*}.
     \end{aligned}
     \end{equation}
     The first inequality in \eqref{eq:conestiuwss} is immediate due to triangle inequality and \eqref{eq:uwinfsupesti}. For the second inequality, we have
     \begin{equation}\label{eq:convdxproj}
         \sum_{T\in\cT_h}h_T\|\bm{\zeta}\cdot\nabla (u^0-\pi_hu^0)\|^2_{L_2(T)}\le C h^3\|u^0\|^2_{H^2(\O)}
     \end{equation}
     and
     \begin{equation}\label{eq:uwssl2esti}
         \sum_{T\in\cT_h}h_T^{-1}\|u^0-\pi_hu^0\|^2_{L_2(T)}\le C h^3\|u^0\|^2_{H^2(\O)}
     \end{equation}
     by standard projection error estimates. We finish the proof by combining Theorem \ref{thm:upwesti}, \eqref{eq:upwnormstrong}, \eqref{eq:upwnormss}, \eqref{eq:convdxproj}, and \eqref{eq:uwssl2esti}.
 \end{proof}

\section{The Problem \eqref{eq:cdr1} and Discretization}
The weak form of the problem \eqref{eq:cdr1} is to find $u\in V:=H^1(\O)$ such that
\begin{equation}\label{eq:cdr1weak}
     a(u,v)=(f,v)_\LT+\int_{\partial\O^-}|\bm{\zeta}\cdot\bn| g\ \!v\ \! dx\quad\forall v\in V,
\end{equation}
where the bilinear form $a(\cdot,\cdot)$ is defined as
\begin{equation}\label{eq:contbicdr1}
    a(v,w)=\eps a^d(v,w)+a^{ar}(v,w)
\end{equation}
for the bilinear form $a^d(v,w):=(\nabla v, \nabla w)_\LT$ and $a^{ar}(\cdot,\cdot)$ defined in \eqref{eq:contbi}. The problem \eqref{eq:cdr1weak} is well-posed \cite{di2011mathematical} under the assumption \eqref{eq:advassump}.

The discrete problem for \eqref{eq:cdr1weak} is to find $u_h\in V_h$ such that 
\begin{equation}\label{eq:dwdgf}
\begin{aligned}
    a_h(u_h,v_h)&=(f,v_h)_\LT+\int_{\partial\O^-}|\bm{\zeta}\cdot\bn| g\ \!v_h\ \! dx\\
    &\quad\quad-\eps\l g,\overline{\nabla}_{h,0} v_h\cdot\bn-\frac{\sigma_e}{h_e}v_h\r_{\cE_h^B}\quad\forall v_h\in V_h,
\end{aligned}
\end{equation}
where the bilinear form $a_h(\cdot,\cdot):=\eps a^{d}_h(\cdot,\cdot)+a^{upw}_h(\cdot,\cdot)$. Here the bilinear form $a_h^{upw}(\cdot,\cdot)$ is defined in \eqref{eq:discretebistab} and $a_h^d(\cdot,\cdot)$ (cf. \cite{lewis2020convergence}) is defined as

\begin{equation}\label{eq:dwdgdfbilinear}
    \begin{aligned}
        a_h^d(v,w):=\frac12\left[(\nabla^+_{h,0}v,\nabla^+_{h,0}w)_{\cT_h}+(\nabla^-_{h,0}v,\nabla^-_{h,0}w)_{\cT_h}\right]+\l\frac{\sigma_e}{h_e}[v],[w]\r_{\cE_h}
    \end{aligned}
\end{equation}
with the penalty parameter $\sigma_e\ge0$ for all $e\in \cE_h$.

\begin{remark}
    Unlike most standard DG methods where the penalty parameter is positive, DWDG methods allow $\sigma_e=0$ for all $e\in\cE_h$ under the assumptions $\mathcal{T}_h$ is locally quasi-uniform and each simplex in the triangulation has at most one boundary edge. 
    This result was established in \cite{lewis2014convergence,lewis2020convergence,feng2016discontinuous} for a diffusive equation. Here we maintain the same assumptions and allow the case where $\sigma_e=0$ for the general convection-diffusion-reaction equation. 
\end{remark}

\subsection{Consistency}

Let $u$ be the solution to \eqref{eq:cdr1} and $u_h$ be the solution to \eqref{eq:dwdgf}. It follows from \cite{lewis2020convergence} and \eqref{eq:stabconsis} that

\begin{equation}\label{eq:dwdgconsis}
    a_h(u-u_h,v_h)=-\eps\l \{\overline{\nabla}_{h,g}u-\nabla u\}\cdot\bn,[v_h]\r_{\cE_h}\quad\forall v_h\in V_h.
\end{equation}

\begin{remark}
    The method \eqref{eq:dwdgf} is not consistent in the sense of \eqref{eq:dwdgconsis}.
\end{remark}

\subsection{Coercivity}

Define the norm $\|\cdot\|_h$ on $V_h$ by
\begin{equation}\label{eq:dghnorm}
    \|v\|_h^2:=\eps\|v\|_d^2+\|v\|_{upw}^2,
\end{equation}
where $\|\cdot\|_{upw}$ is defined in \eqref{eq:upwnorm} and $\|\cdot\|_d$ is defined as
\begin{equation}\label{eq:dwdgnorm}
    \|v\|_d^2:=\frac12(\|\nabla^+_{h,0}v\|_{\LT}^2+\|\nabla^-_{h,0}v\|_{\LT}^2)+\sum_{e\in\cE_h}\frac{\sigma_e}{h_e}\|[v]\|_{L_2(e)}^2.
\end{equation}

It follows from \eqref{eq:stabcoer}, \eqref{eq:dwdgdfbilinear}, and \eqref{eq:dwdgnorm} that
\begin{equation}\label{eq:ahcoer}
    a_h(v_h,v_h)\ge C\|v_h\|_h^2 \quad\forall v_h\in V_h.
\end{equation}

\subsection{Convergence Analysis}
Note that \eqref{eq:dwdgf} is well-posed due to the discrete coercivity \eqref{eq:ahcoer}.
It is shown (cf. \cite[(3.15)]{lewis2020convergence}) that for $\sigma_e\ge0$, 
\begin{equation}\label{eq:dwdgbounded}
  a_h^d(v,w)\le\|v\|_d\|w\|_d\quad\forall v, w\in V+V_h.
\end{equation}
Consequently, we have, by \eqref{eq:stabbound} and \eqref{eq:dwdgbounded},
\begin{equation}\label{eq:ahbound}
    a_h(v-\pi_hv,w_h)\le C\|v-\pi_hv\|_{h,*}\|w_h\|_h\quad\forall v\in V,\ w_h\in V_h,
\end{equation}
where the norm $\|\cdot\|_{h,*}$ is defined by
\begin{equation}\label{eq:hstarnorm}
    \|v\|_{h,*}^2=\eps\|v\|_d^2+\|v\|^2_{upw,*}.
\end{equation}
Here the operator $\pi_h: V\rightarrow V_h$ is the $L_2$-orthogonal projection. 

\begin{theorem}
    Let $u$ be the solution to \eqref{eq:cdr1} and $u_h$ be the solution to \eqref{eq:dwdgf}. Assume $u\in H^2(\O)$ and then we have
    \begin{equation}\label{eq:hnormesti}
        \|u-u_h\|_h\le C(\eps^\frac12h+h^\frac32)\|u\|_{H^2(\O)}.
    \end{equation}
\end{theorem}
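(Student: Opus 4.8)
The plan is to follow the standard Strang-type argument that has already been set up by the preceding lemmas, combining the discrete coercivity \eqref{eq:ahcoer}, the boundedness estimate \eqref{eq:ahbound}, and the (inconsistent) Galerkin orthogonality relation \eqref{eq:dwdgconsis}, and then to control the resulting consistency error by projection estimates. Write $e_h := \pi_h u - u_h \in V_h$ and split $u - u_h = (u - \pi_h u) + e_h$; by the triangle inequality it suffices to bound $\|e_h\|_h$.

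First I would test \eqref{eq:dwdgconsis} against $v_h = e_h$ and use coercivity:
\begin{equation*}
    C\|e_h\|_h^2 \le a_h(e_h, e_h) = a_h(\pi_h u - u, e_h) + a_h(u - u_h, e_h) = a_h(\pi_h u - u, e_h) - \eps\bl \{\overline{\nabla}_{h,g}u - \nabla u\}\cdot\bn, [e_h] \br_{\cE_h}.
\end{equation*}
The first term is handled directly by \eqref{eq:ahbound}, giving $a_h(\pi_h u - u, e_h) \le C\|u - \pi_h u\|_{h,*}\|e_h\|_h$. For the second term I would use Cauchy--Schwarz on each edge together with a trace/inverse inequality to bound $\sum_{e}\|[e_h]\|_{L_2(e)}^2$ by $\eps^{-1}$ times a piece of $\|e_h\|_h^2$ (since $\eps\|v\|_d^2$ controls $\sum_e (\sigma_e/h_e)\|[v]\|^2$ only when $\sigma_e>0$; in the penalty-free case one instead uses that $\|\cdot\|_d$ together with local quasi-uniformity controls the jumps via the DWDG stability theory cited from \cite{lewis2014convergence,lewis2020convergence}). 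Absorbing $\|e_h\|_h$ into the left side leaves
\begin{equation*}
    \|e_h\|_h \le C\|u - \pi_h u\|_{h,*} + C\eps^{1/2}\Bigl(\sum_{e\in\cE_h} h_e\|\{\overline{\nabla}_{h,g}u - \nabla u\}\|_{L_2(e)}^2\Bigr)^{1/2},
\end{equation*}
after rebalancing the powers of $h_e$ using $h_e \approx h_T$.

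It then remains to estimate the two right-hand pieces by $h^{3/2}\|u\|_{H^2(\O)}$ and $\eps^{1/2}h\|u\|_{H^2(\O)}$ respectively. For $\|u - \pi_h u\|_{h,*}^2 = \eps\|u-\pi_h u\|_d^2 + \|u - \pi_h u\|_{upw,*}^2$: the $\|\cdot\|_{upw,*}$ part gives $O(h^3)\|u\|_{H^2}^2$ exactly as in Lemma \ref{lem:conestiuwss} / \eqref{eq:abesti} (standard $L_2$-projection and trace estimates on $\pi_h$, an $H^1$-conforming-type argument for the jump terms using that $u\in H^2(\O)$), and the $\eps\|u-\pi_h u\|_d^2$ part is $O(\eps h^2)\|u\|_{H^2}^2$ by the DWDG projection bound from \cite{lewis2020convergence} (approximation of $\nabla^{\pm}_{h,0}(u-\pi_h u)$, again using a trace inequality for the interior-jump contributions). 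For the consistency term, on each edge $\{\overline{\nabla}_{h,g}u - \nabla u\}$ is bounded using the approximation property of the discrete gradient $\overline{\nabla}_{h,g}$ established in \cite{feng2016discontinuous,lewis2020convergence} (it reproduces $\nabla u$ up to $O(h)$ in a suitable mesh-dependent norm), so $\sum_e h_e\|\{\overline{\nabla}_{h,g}u-\nabla u\}\|_{L_2(e)}^2 \le Ch^2\|u\|_{H^2(\O)}^2$. Adding up: $\|e_h\|_h \le C(h^{3/2} + \eps^{1/2}h)\|u\|_{H^2(\O)}$, and likewise $\|u - \pi_h u\|_h \le C(\eps^{1/2}h + h^{3/2})\|u\|_{H^2(\O)}$ — note $\|u-\pi_h u\|_{upw}\le\|u-\pi_h u\|_{upw,*}=O(h^{3/2})$ — which yields \eqref{eq:hnormesti}.

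The main obstacle is the second term, i.e. controlling the inconsistency $-\eps\bl\{\overline{\nabla}_{h,g}u - \nabla u\}\cdot\bn,[e_h]\br_{\cE_h}$: one must bound $\|[e_h]\|$ on edges by something already present in $\|e_h\|_h$ with the right scaling in both $h$ and $\eps$, which in the penalty-free setting ($\sigma_e=0$) is delicate and relies on the DWDG jump-control machinery (local quasi-uniformity, at most one boundary edge per simplex) rather than a naive penalty bound. Getting the $\eps$-powers to land so that this contributes exactly $O(\eps^{1/2}h)$ (and not, say, $O(h^{1/2})$) is the crux; everything else is routine projection/trace/inverse estimates of the kind already invoked in Section 3 and in \cite{lewis2020convergence,di2011mathematical}.
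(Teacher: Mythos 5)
Your proposal is correct and follows essentially the same route as the paper: decompose via $\pi_h u$, use coercivity \eqref{eq:ahcoer} and boundedness \eqref{eq:ahbound} on $\pi_h u - u_h$, control the inconsistency term $-\eps\bl\{\overline{\nabla}_{h,g}u-\nabla u\}\cdot\bn,[\pi_h u-u_h]\br_{\cE_h}$ by $C\eps^{1/2}h\|u\|_{H^2(\O)}\|\pi_h u-u_h\|_h$, and finish with projection/trace estimates and the triangle inequality. The only cosmetic difference is that the paper imports this consistency bound \eqref{eq:consiserror1} directly from \cite{lewis2020convergence} as a black box, whereas you sketch its derivation (Cauchy--Schwarz with $h_e^{\pm 1}$ weights, DWDG jump control by $\|\cdot\|_d$, and the $O(h)$ approximation property of $\overline{\nabla}_{h,g}$), which is exactly the content of the cited result.
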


\begin{proof}
    It follows from \cite[Theorem 4.2]{lewis2020convergence} and \cite{di2011mathematical} that
    \begin{equation}\label{eq:projerror}
    \begin{aligned}
         \|u-\pi_hu\|_h^2&=\eps\|u-\pi_hu\|^2_d+\|u-\pi_hu\|^2_{upw}\\
        &\le C  (\eps h^2+h^3)\|u\|^2_{H^2(\O)}.
    \end{aligned}
    \end{equation}
    It follows from \eqref{eq:ahcoer}, \eqref{eq:dwdgconsis}, and \eqref{eq:ahbound} that
    \begin{equation}\label{eq:consiserror}
    \begin{aligned}
        \|\pi_hu-u_h\|_h^2&\lesssim a_h(\pi_hu-u_h,\pi_hu-u_h)\\
        &= a_h(\pi_hu-u,\pi_hu-u_h)+a_h(u-u_h,\pi_hu-u_h)\\
        &\lesssim\|u-\pi_hu\|_{h,*}\|\pi_hu-u_h\|_h\\
        &\quad-\eps\l \{\overline{\nabla}_{h,g}u-\nabla u\}\cdot\bn,[\pi_hu-u_h]\r_{\cE_h}.
    \end{aligned}
    \end{equation}
    It is shown in \cite{lewis2020convergence} that
    \begin{equation}\label{eq:consiserror1}
        \Big\vert\l \{\overline{\nabla}_{h,g}u-\nabla u\}\cdot\bn,[\pi_hu-u_h]\r_{\cE_h}\Big\vert\le Ch\|u\|_{H^2(\O)}\|\pi_hu-u_h\|_d,
    \end{equation}
    and hence
    \begin{equation}\label{eq:consiserror3}
        \eps\Big\vert\l \{\overline{\nabla}_{h,g}u-\nabla u\}\cdot\bn,[\pi_hu-u_h]\r_{\cE_h}\Big\vert\le C\eps^\frac12h\|u\|_{H^2(\O)}\|\pi_hu-u_h\|_h.
    \end{equation}
    Similar to Theorem \ref{thm:upwesti}, we have, by the trace inequality with scaling,
    \begin{equation}\label{eq:bderror}
    \begin{aligned}
        \sum_{T\in\cT_h}\|u-\pi_hu\|^2_{L_2(\partial T)}&\le C\sum_{T\in\cT_h}h_T^{-1}\|u-\pi_hu\|^2_{L_2(T)}+h_T\|\nabla (u-\pi_hu)\|^2_{L_2(T)}\\
        &\le C h^3\|u\|_{H^2(\O)}.
    \end{aligned}
    \end{equation}
    It follows from \eqref{eq:projerror}, \eqref{eq:hstarnorm}, and \eqref{eq:upwnorm1} that
    \begin{equation}\label{eq:hstarnormperror}
        \|u-\pi_hu\|_{h,*}\le C(\eps^\frac12 h+h^\frac32)\|u\|_{H^2(\O)}.
    \end{equation}
    We then conclude, by \eqref{eq:hstarnormperror}, \eqref{eq:consiserror}, and \eqref{eq:consiserror3},
    \begin{equation}\label{eq:consiserror2}
        \|\pi_hu-u_h\|_h\le C (\eps^\frac12 h+h^\frac32)\|u\|_{H^2(\O)}.
    \end{equation}
    Combining \eqref{eq:projerror}, \eqref{eq:consiserror2}, and triangle inequality, we obtain
    \begin{equation}
        \|u-u_h\|_h\le C (\eps^\frac12 h+h^\frac32)\|u\|_{H^2(\O)}.
    \end{equation}
\end{proof}

\subsection{Convergence Analysis Based On an inf-sup Condition}

We present an error estimate with a stronger norm that is similar to Section \ref{sec:infsupconv}. Define 
\begin{equation}\label{eq:upwnormstrongah}
    \|v\|^2_{h\sharp}=\|v\|^2_{h}+\sum_{T\in\cT_h}h_T\|\bm{\zeta}\cdot\nabla v\|^2_{L_2(T)}.
\end{equation}
We first need the following inf-sup condition (cf. \cite[Lemma 2.35]{di2011mathematical} and \cite[Lemma A.1]{gopalakrishnan2003multilevel}).
\begin{lemma}\label{lem:infsupconvah}
    We have
    \begin{equation}\label{eq:disinfsuph}
        \sup_{w_h\in V_h\setminus\{0\}}\frac{a_h(v_h,w_h)}{\|w_h\|_{h\sharp}}\ge C\|v_h\|_{h\sharp}.
    \end{equation}
\end{lemma}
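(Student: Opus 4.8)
The plan is to reduce the inf-sup condition for the full bilinear form $a_h(\cdot,\cdot)=\eps a_h^d(\cdot,\cdot)+a_h^{upw}(\cdot,\cdot)$ to the already-established inf-sup condition for $a_h^{upw}$ on the $\|\cdot\|_{upw\sharp}$-norm (Lemma \ref{lem:infsupconv}) together with the coercivity of $a_h^d$. First I would fix $v_h\in V_h\setminus\{0\}$. By Lemma \ref{lem:infsupconv} applied to the convective-reactive part, there exists $w_h\in V_h$ with $\|w_h\|_{upw\sharp}=\|v_h\|_{upw\sharp}$ (after normalization) such that $a_h^{upw}(v_h,w_h)\ge C_1\|v_h\|_{upw\sharp}^2$. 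Testing $a_h$ against $w_h$ gives $a_h(v_h,w_h)=\eps a_h^d(v_h,w_h)+a_h^{upw}(v_h,w_h)\ge \eps a_h^d(v_h,w_h)+C_1\|v_h\|_{upw\sharp}^2$; the cross term $\eps a_h^d(v_h,w_h)$ is bounded below by $-\eps\|v_h\|_d\|w_h\|_d$ using \eqref{eq:dwdgbounded}, which is potentially negative and of the wrong sign. So a single test function does not suffice.

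The standard fix, which I would use, is to take the test function $w_h = v_h + \delta\, \tilde w_h$ where $\tilde w_h$ is the inf-sup realizer for the convective part and $\delta>0$ is a small parameter to be chosen. Then
\begin{equation*}
a_h(v_h, v_h+\delta\tilde w_h) = \eps a_h^d(v_h,v_h) + a_h^{upw}(v_h,v_h) + \delta\eps a_h^d(v_h,\tilde w_h) + \delta\, a_h^{upw}(v_h,\tilde w_h).
\end{equation*}
The first two terms are controlled from below by $C\|v_h\|_h^2$ using coercivity \eqref{eq:ahcoer}. The fourth term is $\ge \delta C_1 \|v_h\|_{upw\sharp}^2$ by the choice of $\tilde w_h$ (normalized so $\|\tilde w_h\|_{upw\sharp}\lesssim\|v_h\|_{upw\sharp}$). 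The third term, the dangerous cross term, is bounded by $\delta\eps\|v_h\|_d\|\tilde w_h\|_d$; here I would invoke the inverse/trace estimate bounding $\|\tilde w_h\|_d \lesssim h^{-1}\|\tilde w_h\|_{\LT}\lesssim h^{-1}\|v_h\|_{upw\sharp}$ together with $\eps\|v_h\|_d\le \eps^{1/2}\|v_h\|_h$ and Young's inequality, absorbing part into the $\eps a_h^d(v_h,v_h)$ and part into the $\delta C_1\|v_h\|_{upw\sharp}^2$ term. Choosing $\delta$ small enough (independent of $h$ and $\bz$, using $\eps\le C h$ in the convection-dominated regime, or more carefully tracking the $\eps$ dependence) yields $a_h(v_h, v_h+\delta\tilde w_h)\ge C\|v_h\|_{h\sharp}^2$ while $\|v_h+\delta\tilde w_h\|_{h\sharp}\lesssim \|v_h\|_{h\sharp}$, which gives \eqref{eq:disinfsuph}.

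I would also need to verify that the diffusive contribution $\eps a_h^d(v_h,\tilde w_h)$ does not spoil the $h_T\|\bz\cdot\nabla v_h\|_{L_2(T)}^2$ part of the norm; since that part is already furnished entirely by $a_h^{upw}(v_h,\tilde w_h)$ through Lemma \ref{lem:infsupconv}, and the diffusive term only needs to be absorbed rather than produce anything, this is handled by the same Young's inequality splitting. The main obstacle is the careful bookkeeping of the $\eps$-dependence in the cross term $\eps a_h^d(v_h,\tilde w_h)$: one must ensure the constant $\delta$ and the final constant $C$ can be taken independent of $\eps$ for $\eps$ up to $O(h)$ (or, if a fully $\eps$-robust statement is wanted, show the estimate degrades gracefully). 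This is exactly the point where the inverse estimate $\|\tilde w_h\|_d\lesssim h^{-1}\|\tilde w_h\|_{\LT}$ and the relation between $\|v_h\|_d$ and $\eps^{1/2}$-weighted norms must be combined, mirroring \cite[Lemma 2.35]{di2011mathematical} and \cite[Lemma A.1]{gopalakrishnan2003multilevel}; everything else is routine once that balance is set up correctly.
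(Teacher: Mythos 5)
Your overall strategy---test with $v_h$ plus a small multiple of a function that recovers the streamline derivative---is the same one the paper uses (the paper organizes it as two separate bounds, coercivity against $v_h$ and an estimate against $w_h|_T=h_T\langle\bm{\zeta}\rangle_T\cdot\nabla v_h$, combined at the end, rather than a single test function $v_h+\delta\tilde w_h$; this is cosmetic). The genuine gap is exactly at the point you flag as ``the main obstacle'' and then leave open: the diffusive cross term $\eps\, a_h^d(v_h,\tilde w_h)$. Because you take $\tilde w_h$ to be the \emph{abstract} realizer from Lemma \ref{lem:infsupconv}, the only information you have about it is $\|\tilde w_h\|_{upw\sharp}\lesssim\|v_h\|_{upw\sharp}$, and your fallback bound $\|\tilde w_h\|_d\lesssim h^{-1}\|\tilde w_h\|_{\LT}$ yields
\begin{equation*}
\eps\,|a_h^d(v_h,\tilde w_h)|\;\lesssim\;\eps\,\|v_h\|_d\,h^{-1}\|v_h\|_{upw\sharp}\;\le\;\eps^{1/2}h^{-1}\,\|v_h\|_h\,\|v_h\|_{h\sharp},
\end{equation*}
so absorbing it forces $\delta\lesssim h/\eps^{1/2}$. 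Even under the convection-dominated assumption $\eps\le h$ this only gives $\delta\lesssim h^{1/2}$, so your inf-sup constant degrades like $h^{1/2}$ (and the lemma is also used in the diffusion-dominated regime, where the loss is worse). That would destroy the optimal rate in Theorem \ref{thm:conestiuwssh}.

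The fix requires opening the black box: one must use the explicit streamline test function $w_h|_T=h_T\langle\bm{\zeta}\rangle_T\cdot\nabla v_h$ and prove the \emph{structural} bound $\|w_h\|_d\lesssim\|v_h\|_d$, which is what the paper does in \eqref{eq:penaltynormeq}--\eqref{eq:avehnormeq} using the DWDG gradient estimates of \cite[Lemma 4.1]{lewis2020convergence} (note $\langle\bm{\zeta}\rangle_T\cdot\nabla v_h$ is piecewise constant, so its broken gradient vanishes and only the jump terms need the trace/inverse machinery). With that bound the cross term becomes $\eps\|v_h\|_d\|w_h\|_d\lesssim(\eps^{1/2}\|v_h\|_d)(\eps^{1/2}\|v_h\|_d)\le\|v_h\|_h\|v_h\|_{h\sharp}$, with no factor of $h^{-1}$ and no restriction on $\eps$. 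So your proposal is not complete as written: the one step you defer is the step that actually requires the new argument, and it cannot be carried out with the abstract realizer from Lemma \ref{lem:infsupconv} alone.
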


\begin{proof}
    A proof is provided in Appendix \ref{apx:infsup}.
\end{proof}

Similar to Section \ref{sec:infsupconv}, we define the following norm on $V+V_h$,

\begin{equation}\label{eq:upwnormssah}
    \|v\|^2_{h\sharp,*}:=\|v\|^2_{h\sharp}+\sum_{T\in\cT_h}(h_T^{-1}\|v\|^2_{L_2(T)}+\|v\|^2_{L_2(\partial T)}).
\end{equation}

We then have \cite{di2011mathematical}
\begin{equation}\label{eq:stabboundssh}
    a_h(v-\pi_h v,w_h)\le C\|v-\pi_h v\|_{h\sharp,*}\|w_h\|_{h\sharp},
\end{equation}
for all $v\in V$ and $w_h\in V_h$.

Consequently, we obtain the following convergence theorem from \eqref{eq:disinfsuph} and \eqref{eq:stabboundssh}.
\begin{theorem}\label{thm:conestiuwssh}
Let $u$ be the solution to \eqref{eq:cdr1} and $u_h$ be the solution to \eqref{eq:dwdgf}. Assume $u\in H^2(\O)$ and then we have
     \begin{equation}\label{eq:conestiuwssh}
    \|u-u_h\|_{h\sharp}\le C (\eps^\frac12 h+h^\frac32)\|u\|_{H^2(\O)}.
\end{equation}
 \end{theorem}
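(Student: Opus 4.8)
The plan is to mimic exactly the argument used for Lemma~\ref{lem:conestiuwss} in the reduced-problem setting, but now carrying along the diffusion contribution, which by \eqref{eq:dwdgconsis} enters only through a non-consistency term that is already controlled in \eqref{eq:consiserror3}. First I would split the error via the $L_2$-projection $\pi_h$: write $u-u_h=(u-\pi_h u)+(\pi_h u-u_h)$ and estimate the two pieces in $\|\cdot\|_{h\sharp}$ separately. The first piece is handled purely by projection error estimates: combining \eqref{eq:projerror}, the bound \eqref{eq:convdxproj} for the streamline-derivative term $\sum_T h_T\|\bz\cdot\nabla(u-\pi_h u)\|_{L_2(T)}^2$, the $L_2$ bound \eqref{eq:uwssl2esti}, and the trace estimate \eqref{eq:bderror}, one gets $\|u-\pi_h u\|_{h\sharp,*}\le C(\eps^{1/2}h+h^{3/2})\|u\|_{H^2(\O)}$, exactly analogous to \eqref{eq:hstarnormperror}.

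For the discrete error $\pi_h u-u_h\in V_h$, I would invoke the inf-sup condition \eqref{eq:disinfsuph} of Lemma~\ref{lem:infsupconvah}: there exists $w_h\in V_h\setminus\{0\}$ with
\begin{equation*}
    \|\pi_h u-u_h\|_{h\sharp}\le C\frac{a_h(\pi_h u-u_h,w_h)}{\|w_h\|_{h\sharp}}.
\end{equation*}
Then I would rewrite $a_h(\pi_h u-u_h,w_h)=a_h(\pi_h u-u,w_h)+a_h(u-u_h,w_h)$. The first term is bounded by \eqref{eq:stabboundssh} by $C\|u-\pi_h u\|_{h\sharp,*}\|w_h\|_{h\sharp}$. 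The second term is the non-consistency term: by \eqref{eq:dwdgconsis} it equals $-\eps\langle\{\overline{\nabla}_{h,g}u-\nabla u\}\cdot\bn,[w_h]\rangle_{\cE_h}$, which by \eqref{eq:consiserror1}–\eqref{eq:consiserror3} is bounded by $C\eps^{1/2}h\|u\|_{H^2(\O)}\|w_h\|_{h}\le C\eps^{1/2}h\|u\|_{H^2(\O)}\|w_h\|_{h\sharp}$ since $\|\cdot\|_h\le\|\cdot\|_{h\sharp}$. Dividing by $\|w_h\|_{h\sharp}$ gives $\|\pi_h u-u_h\|_{h\sharp}\le C(\eps^{1/2}h+h^{3/2})\|u\|_{H^2(\O)}$.

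Finally, I would combine the two pieces by the triangle inequality $\|u-u_h\|_{h\sharp}\le\|u-\pi_h u\|_{h\sharp}+\|\pi_h u-u_h\|_{h\sharp}$, using $\|u-\pi_h u\|_{h\sharp}\le\|u-\pi_h u\|_{h\sharp,*}$, to reach \eqref{eq:conestiuwssh}. The only genuinely new ingredient relative to the already-proved Lemma~\ref{lem:conestiuwss} and the $\|\cdot\|_h$-theorem is that the inf-sup constant in Lemma~\ref{lem:infsupconvah} must be uniform in $\eps$ and $h$ for the $a_h=\eps a_h^d+a_h^{upw}$ form, not just for $a_h^{upw}$; this is precisely what is deferred to Appendix~\ref{apx:infsup}, and it is the main obstacle — one must show that adding the nonnegative, coercive diffusion block does not spoil the streamline-derivative control that the inf-sup test function $w_h$ was engineered to provide in the pure-convection case. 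Everything else is a routine repackaging of \eqref{eq:projerror}, \eqref{eq:bderror}, \eqref{eq:convdxproj}, \eqref{eq:uwssl2esti}, and \eqref{eq:consiserror3}.
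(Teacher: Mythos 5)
Your proposal is correct and follows essentially the same route as the paper: the same splitting via the $L_2$-projection $\pi_h$, the same use of the inf-sup condition \eqref{eq:disinfsuph} together with the boundedness estimate \eqref{eq:stabboundssh} and the consistency-error bound \eqref{eq:consiserror3} for the discrete part, the projection bound \eqref{eq:projhsharps} for the interpolation part, and a final triangle inequality. The paper likewise defers the only substantive new ingredient, the $\eps$- and $h$-uniform inf-sup constant for $a_h=\eps a_h^d+a_h^{upw}$, to Appendix \ref{apx:infsup}.
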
 

 \begin{proof}
 It follows from \eqref{eq:dwdgconsis}, \eqref{eq:disinfsuph}, \eqref{eq:stabboundssh}, and \eqref{eq:consiserror3} that
     \begin{equation}\label{eq:uwinfsupestih}
     \begin{aligned}
             \|\pi_hu-u_h\|_{h\sharp}&\le C\sup_{w_h\in V_h\setminus\{0\}}\frac{a_h(\pi_hu-u_h,w_h)}{\|w_h\|_{h\sharp}}\\
             &=C\sup_{w_h\in V_h\setminus\{0\}}\frac{a_h(\pi_hu-u,w_h)+a_h(u-u_h, w_h)}{\|w_h\|_{h\sharp}}\\
             &\le C\|u-\pi_hu\|_{h\sharp,*}+C\eps^\frac12h\|u\|_{H^2(\O)}.
     \end{aligned}
     \end{equation}
     We also obtain the following estimate, by \eqref{eq:hstarnormperror} and arguments similar to \eqref{eq:convdxproj} and \eqref{eq:uwssl2esti},
     \begin{equation}\label{eq:projhsharps}
        \|u-\pi_hu\|_{h\sharp,*}\le C(\eps^\frac12 h+h^\frac32)\|u\|_{H^2(\O)}.         
     \end{equation}
     We finish the proof by combining \eqref{eq:uwinfsupestih}, \eqref{eq:projhsharps}, and triangle inequality.
 \end{proof}

 \begin{remark}\label{rem:convergence}
     Theorem \ref{thm:conestiuwssh} implies the following convergence results,
     \begin{equation}
          \|u-u_h\|_{h\sharp}\le\left\{\begin{array}{ll}
              O(h)\quad&\text{when \eqref{eq:cdr1} is diffusion-dominated},\\
              \\
              O(h^\frac32)\quad&\text{when \eqref{eq:cdr1} is convection-dominated}.
          \end{array}\right.
      \end{equation}
      Note that $\|u\|_{H^2(\O)}=O(\eps^{-\frac32})$ (\cite[Part III, Lemma 1.18]{rooscdr}), hence, the estimate \eqref{eq:conestiuwssh} is not informative when $\eps\le h$. More delicate interior error estimates that stay away from the boundary layers and interior layers for standard DG methods can be found in \cite{leykekhman2012local,guzmandginterior}.
 \end{remark}

 \section{Numerical Results}\label{sec:numer}

In this section, we present some numerical examples that support the theoretical results. All experiments are performed using MATLAB. We measure the absolute errors both globally and locally to examine the local behaviors of our numerical methods. For comparison, we test the penalty parameter choices $\sigma_e=5$ and $\sigma_e=0$ for all $e\in\cE_h$.

\begin{example}[Smooth Solution]\label{Ex:smsol}
In this example, we take $\O=[1,3] \times [0,2]$, $\gamma = 0$, $\bm{\zeta} = [ x_1, x_2]^t$, and $\eps = 10^{-9}.$ We define the exact solution as 
\begin{align}\label{eq:smsol}
    u(x_1,x_2) = \dfrac{x_2}{x_1}.
\end{align}    
\end{example}

\begin{figure}[H]
    \centering
    \caption{Results of Example \ref{Ex:smsol}: $u_h$ (left) and $u$ (right) for $h = \sfrac{1}{128}$.}
    \label{fig:smsol}
    \begin{minipage}{.365\textwidth}
    \hspace{-0.7in}
      \includegraphics[width=1.30\linewidth]{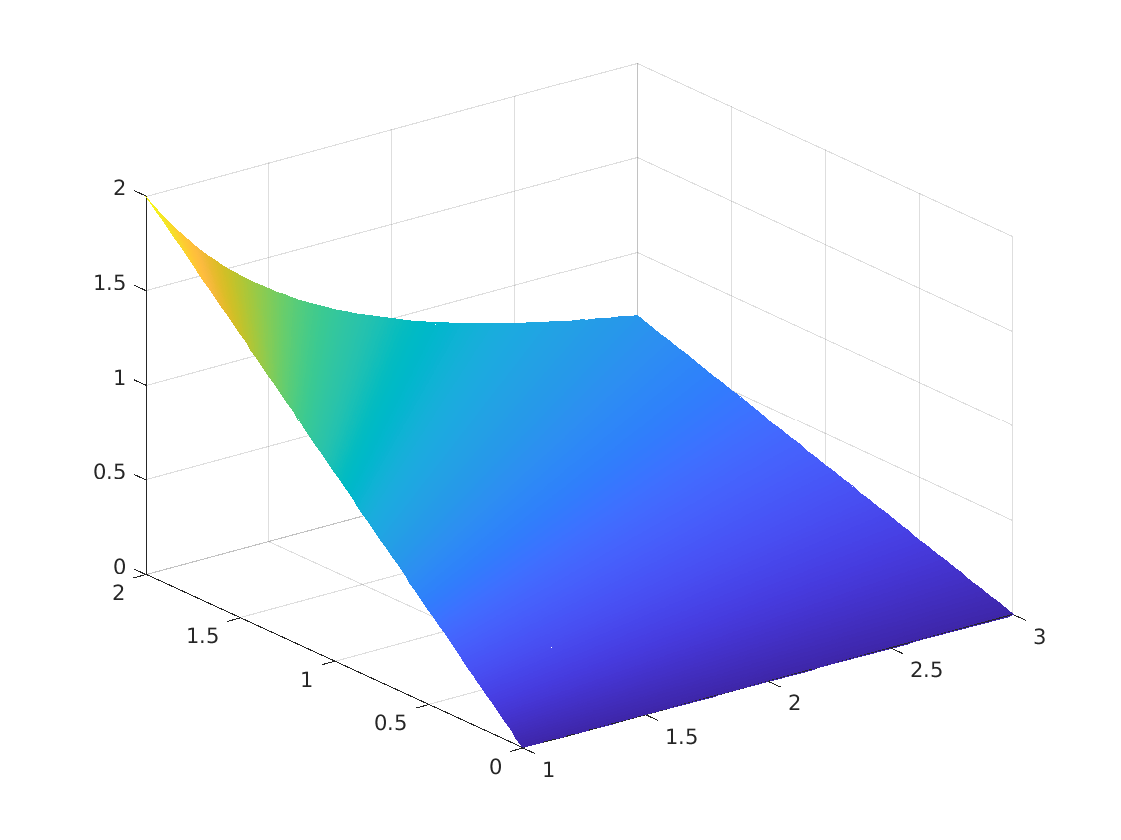}
    \end{minipage}%
    \begin{minipage}{.365\textwidth}
    \centering
      \includegraphics[width=1.30\linewidth]{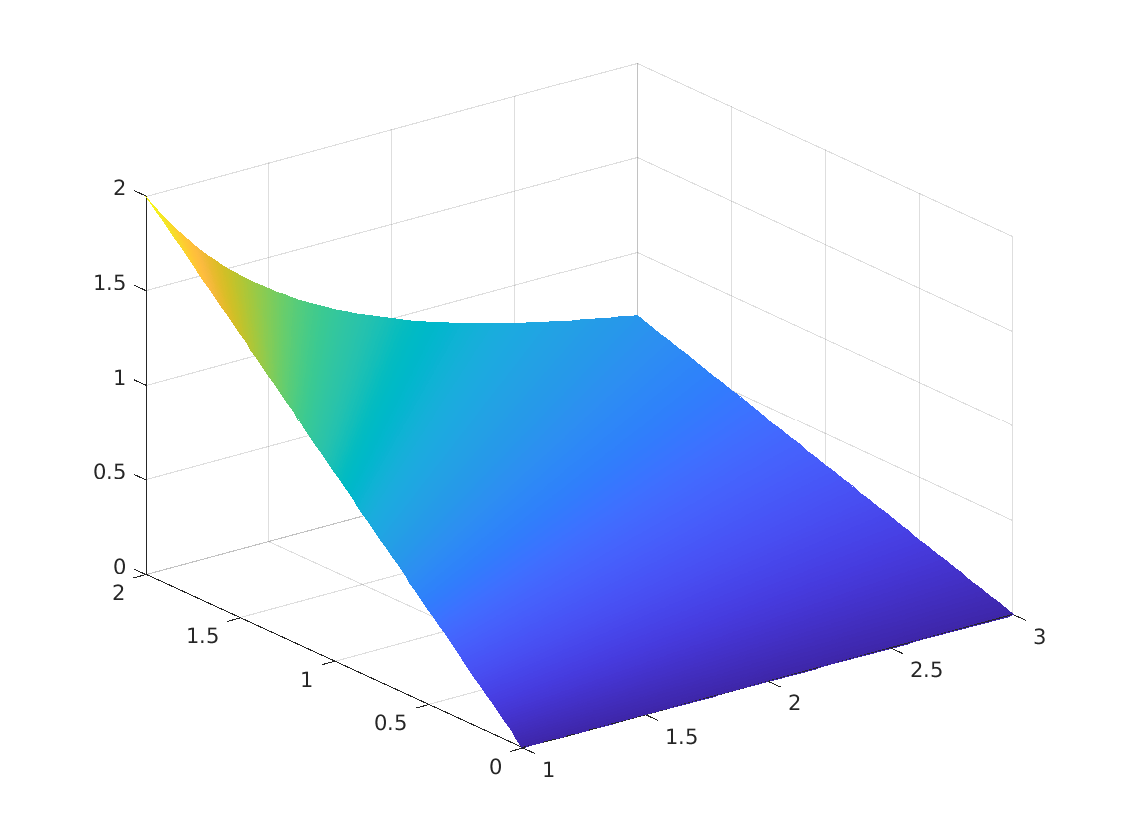}
    \end{minipage}
\end{figure}

We show the global convergence rates in Table \ref{Table:smsol}. We observe $O(h^2)$ convergence in the $L_2$ norm and $O(h^\frac32)$ convergence in the $\|\cdot\|_h$ 
 and $\|\cdot\|_{h\sharp}$ norms. See Figure \ref{fig:smsol} for an illustration of the numerical solution and the exact solution. Note that the convergence rates are all optimal. Indeed, the optimal convergence rates in $L^2$ norm is due to the smoothness of the solution, similar convergence behavior was observed in \cite{ayuso2009discontinuous}. The optimal convergence rates in the $\|\cdot\|_h$ 
 and $\|\cdot\|_{h\sharp}$ norms match with our theoretical results in Remark \ref{rem:convergence}.

\begin{example}[Boundary Layer \cite{ayuso2009discontinuous}]\label{ex:bl1}
In this example, we take $\O= [0,1]^2$, $\gamma = 0$, $\bm{\zeta} = [1,1]^t$ , and $\eps = 10^{-9}$. We define the exact solution as

\begin{align}
    u(x_1,x_2) = x_1 + x_2(1-x_1) + \frac{e^{-1/\eps}-e^{\frac{(x_1-1)(1-x_2)}{\eps}}}{1-e^{-1/\eps}}.
    \label{eq:ex1sol}
\end{align}
Note that the exact solution exhibits boundary layers near $x_1=1$ and $x_2=1$.
\end{example}

\begin{table}[H]
    \centering
    \begin{tabular}{|c|c|cc|cc|cc|}
        \hline
        \multicolumn{1}{|c|}{}  
        & \multicolumn{1}{c|}{}
        & \multicolumn{2}{c|}{$L_2$}
        & \multicolumn{2}{c|}{$||\cdot||_h$}
        & \multicolumn{2}{c|}{$||\cdot||_{h\sharp}$}\\
        & $h$ & Error & Rate & Error & Rate & Error & Rate \\ 
        \hline
        \multirow{7}{*}{$\sigma_e=0$} 
        & 1/4  & 9.95e-03 & - & 3.35e-02 & - &  5.85e-02 & -     \\
        & 1/8  & 2.41e-03 & 2.04 & 1.15e-02 & 1.54 &  2.13e-02 & 1.45      \\
        & 1/16 & 6.01e-04 & 2.01 & 3.97e-03 & 1.53 &  7.65e-03 & 1.48      \\
        & 1/32 & 1.50e-04 & 2.00 & 1.38e-03 & 1.52 &  2.72e-03 & 1.49      \\
        & 1/64 & 3.73e-05 & 2.01 & 4.86e-04 & 1.51 &  9.64e-04  & 1.50      \\
        \hline
        \hline
            \multirow{7}{*}{$\sigma_e=5$} 
        & 1/4  & 9.95e-03 & - & 3.35e-02 & - &  5.85e-02 & -      \\
        & 1/8  & 2.41e-03 & 2.04 & 1.15e-02 & 1.54 &  2.13e-02 & 1.45      \\
        & 1/16 & 6.01e-04 & 2.01 & 3.97e-03 & 1.53 &  7.65e-03 & 1.48      \\
        & 1/32 & 1.50e-04 & 2.00 & 1.38e-03 & 1.52 &  2.72e-03 & 1.49      \\
        & 1/64 & 3.73e-05 & 2.01 & 4.86e-04 & 1.51 &  9.64e-04  & 1.50      \\
        \hline
    \end{tabular}
    \caption{Errors and rates of convergence for $u_h$ on $\O = [1,3] \times [0,2]$ for Example \ref{Ex:smsol} when $\eps = 10^{-9}$.}
    \label{Table:smsol}
\end{table}

\begin{figure}[h]
    \centering
    \caption{Results of Example \ref{ex:bl1}: $u_h$ (left) and $u$ (right) for $\eps = 10^{-9}$, $h = \sfrac{1}{128}$}
    \label{fig:ex1}
    \begin{minipage}{.365\textwidth}
    \hspace{-0.7in}
      \includegraphics[width=1.30\linewidth]{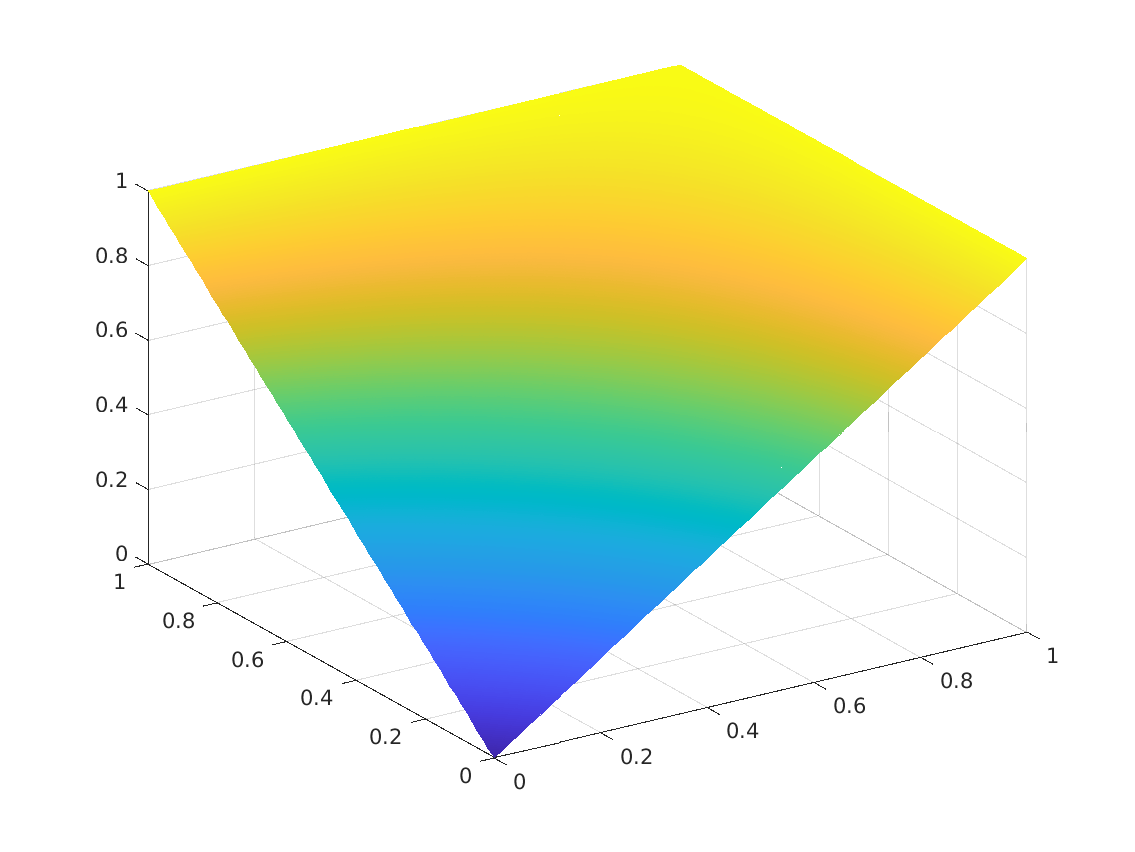}
    \end{minipage}%
    \begin{minipage}{.365\textwidth}
    \centering
      \includegraphics[width=1.30\linewidth]{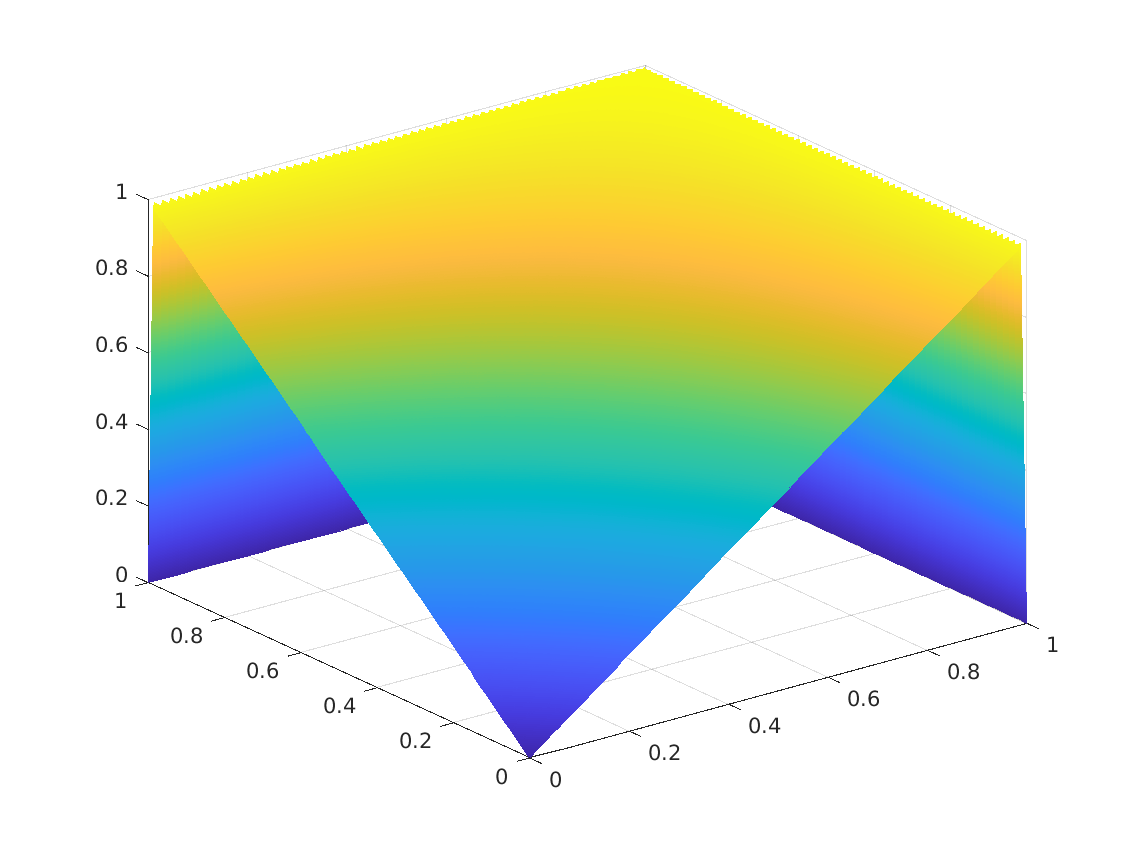}
    \end{minipage}
\end{figure}
\par As observed in Figure \ref{fig:ex1}, the numerical solution $u_h$ has no spurious oscillations in the convection-dominated regime. It is also obvious that the numerical solution $u_h$ ignores the boundary layers since we impose boundary conditions weakly.

Table \ref{Table1:ex1} shows the local convergence results on the subdomain $[0,0.875]^2$ in the $L_2$ norm, the $\|\cdot\|_h$ norm and the $\|\cdot\|_{h\sharp}$ norm. We observe $O(h^\frac32)$ convergence in the $\|\cdot\|_h$ and $\|\cdot\|_{h\sharp}$ norms as well as $O(h^2)$ convergence in the $L_2$ norm. Note that the local convergence behavior in the $L_2$ norm is optimal which indicates the boundary layer does not pollute the solution in the interior. 
\begin{table}[H]
    \centering
    \begin{tabular}{|c|c|cc|cc|cc|}
         \hline
        \multicolumn{1}{|c|}{}  
        & \multicolumn{1}{c|}{}
        & \multicolumn{2}{c|}{$L_2$}
        & \multicolumn{2}{c|}{$||\cdot||_h$}
        & \multicolumn{2}{c|}{$||\cdot||_{h\sharp}$}\\
        & $h$ & Error & Rate & Error & Rate & Error & Rate \\ 
        \hline
        \multirow{5}{*}{$\sigma_e=0$} 
        & 1/8  & 2.32e-04 & -- & 2.29e-03 & -- &  2.10e-02 & --      \\
        & 1/16 & 5.81e-05 & 2.00 & 8.08e-04 & 1.50 &  7.43e-03 & 1.50      \\
        & 1/32 & 1.45e-05 & 2.00 & 2.85e-04 & 1.50 &  2.63e-03 & 1.50      \\
        & 1/64 & 3.63e-06 & 2.00 & 1.00e-04 & 1.50 &  9.28e-04  & 1.50      \\
        \hline
        \hline
            \multirow{5}{*}{$\sigma_e=5$} 
        & 1/8  & 2.32e-04 & -- & 2.29e-03 & -- &  2.10e-02 & --      \\
        & 1/16 & 5.81e-05 & 2.00 & 8.08e-04 & 1.50 &  7.43e-03 & 1.50      \\
        & 1/32 & 1.45e-05 & 2.00 & 2.85e-04 & 1.50 &  2.63e-03 & 1.50      \\
        & 1/64 & 3.63e-06 & 1.99 & 1.00e-04 & 1.50 &  9.28e-04  & 1.50      \\
        \hline
    \end{tabular}
    \caption{Errors and rates of convergence for Example \ref{ex:bl1} on the subdomain $[0,0.875]^2$ (away from the boundary layer) when $\eps = 10^{-9}$.}
    \label{Table1:ex1}
\end{table}


In Table \ref{Table1:ex1global}, we show the global errors in the $L_2$ norm and the $\|\cdot\|_h$ norm on $\O = [0,1]^2$. We observe again the optimal convergence in the $L_2$ norm.  Notice that the global $\|\cdot\|_h$ errors do not converge at all due to the sharp boundary layer near the outflow boundary. Similar convergence behaviors were also observed in \cite{ayuso2009discontinuous}.

\begin{example}[Interior Layer \cite{ayuso2009discontinuous}]\label{ex:inl} 
    In this example, we take $\O = [0,1]^2$, $\gamma = 0$, $\bm{\zeta} = [\frac12, \frac{\sqrt{3}}{2}]^t$, $f = 0$, and the Dirichlet boundary conditions as:
\begin{align*} 
    u(x_1, x_2) =
        \begin{cases}
            1, & \text{on }\{x_2 = 0, 0 \leq x_1 \leq 1\},\\
            1, & \text{on }\{x_1 = 0, x_2 \leq \frac15\},\\
            0, & \text{elsewhere}.\\
        \end{cases}
\end{align*}
\end{example}

\begin{table}[H]
    \centering
    \begin{tabular}{|c|c|cc|cc|}
         \hline
        \multicolumn{1}{|c|}{}  
        & \multicolumn{1}{c|}{}
        & \multicolumn{2}{c|}{$L_2$}
        & \multicolumn{2}{c|}{$||\cdot||_h$} \\
        & $h$ & Error & Rate & Error & Rate  \\ 
        \hline
        \multirow{7}{*}{$\sigma_e=0$} 
       & 1/4 & 1.06e-03 & - & 1.00e+00 & -      \\
        & 1/8 & 2.66e-04 & 2.00 & 1.00e+00 & 0.00       \\
        & 1/16 & 6.64e-05 & 2.00 & 1.00e+00 & 0.00  \\
        & 1/32 & 1.66e-05 & 2.00 & 1.00e+00 & 0.00      \\
        & 1/64 & 4.15e-06 & 2.00 & 1.00e+00 & 0.00   \\
        \hline
        \hline
        \multicolumn{1}{|c|}{}  
        & \multicolumn{1}{c|}{}
        & \multicolumn{2}{c|}{$L_2$}
        & \multicolumn{2}{c|}{$||\cdot||_h$} \\
        & $h$ & Error & Rate & Error & Rate  \\ 
        \hline
        \multirow{7}{*}{$\sigma_e=5$} 
       & 1/4 & 1.06e-03 & - & 1.00e+00 & -      \\
        & 1/8 & 2.66e-04 & 1.99 & 1.00e+00 & 0.00       \\
        & 1/16 & 6.64e-05 & 2.00 & 1.00e+00 & 0.00  \\
        & 1/32 & 1.66e-05 & 2.00 & 1.00e+00 & 0.00      \\
        & 1/64 & 4.15e-06 & 2.00 & 1.00e+00 & 0.00   \\
        \hline
    \end{tabular}

    \caption{Errors and rates of convergence for $u_h$ on $\O = [0,1]^2$ for Example \ref{ex:bl1} when $\eps = 10^{-9}$ }
    \label{Table1:ex1global}
\end{table}

\begin{figure}[H]
\centering
\caption{Results of Example \ref{ex:inl}: $u_h$ (left) and the profile of $u_h$ at $x_1=0$ (right) for $\eps=10^{-9}$ and $h = \sfrac{1}{128}$.}
\label{fig:inl}
\begin{minipage}{.365\textwidth}
\hspace{-0.7in}
  \includegraphics[width=1.30\linewidth]{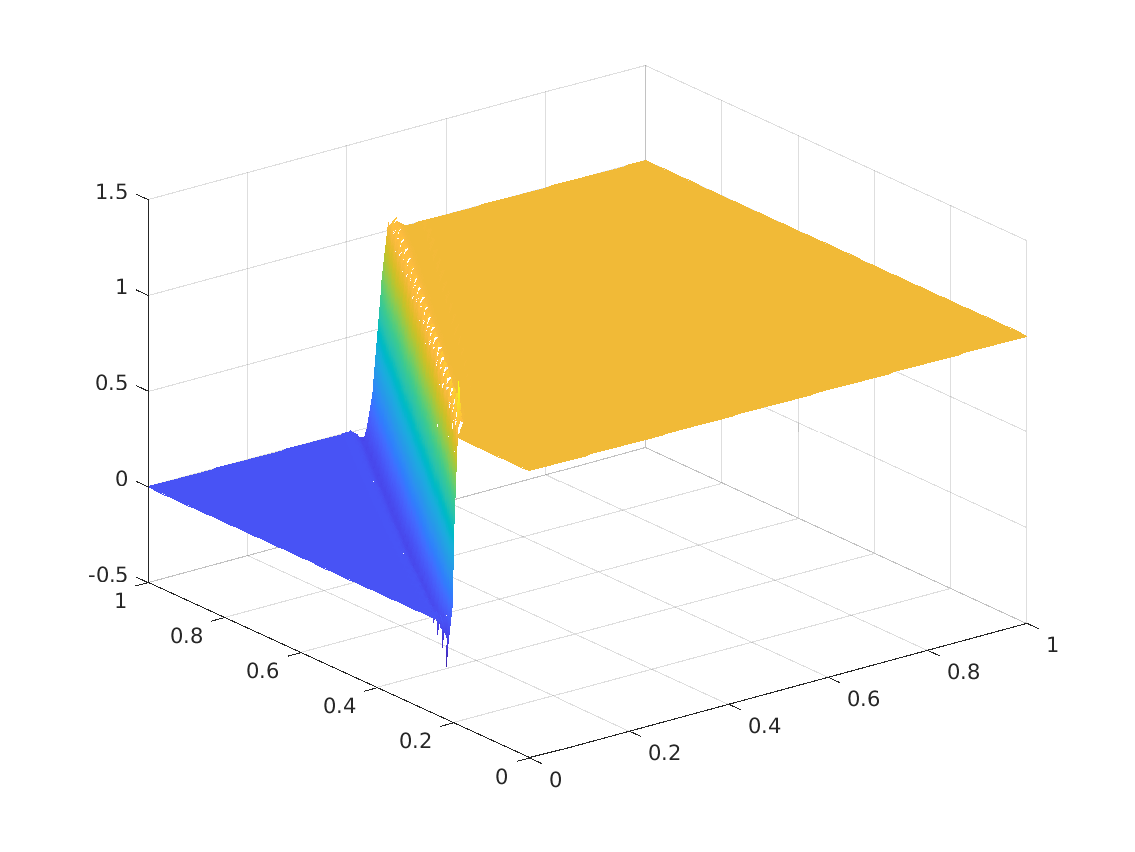}
\end{minipage}%
\begin{minipage}{.365\textwidth}
\centering
  \includegraphics[width=1.30\linewidth]{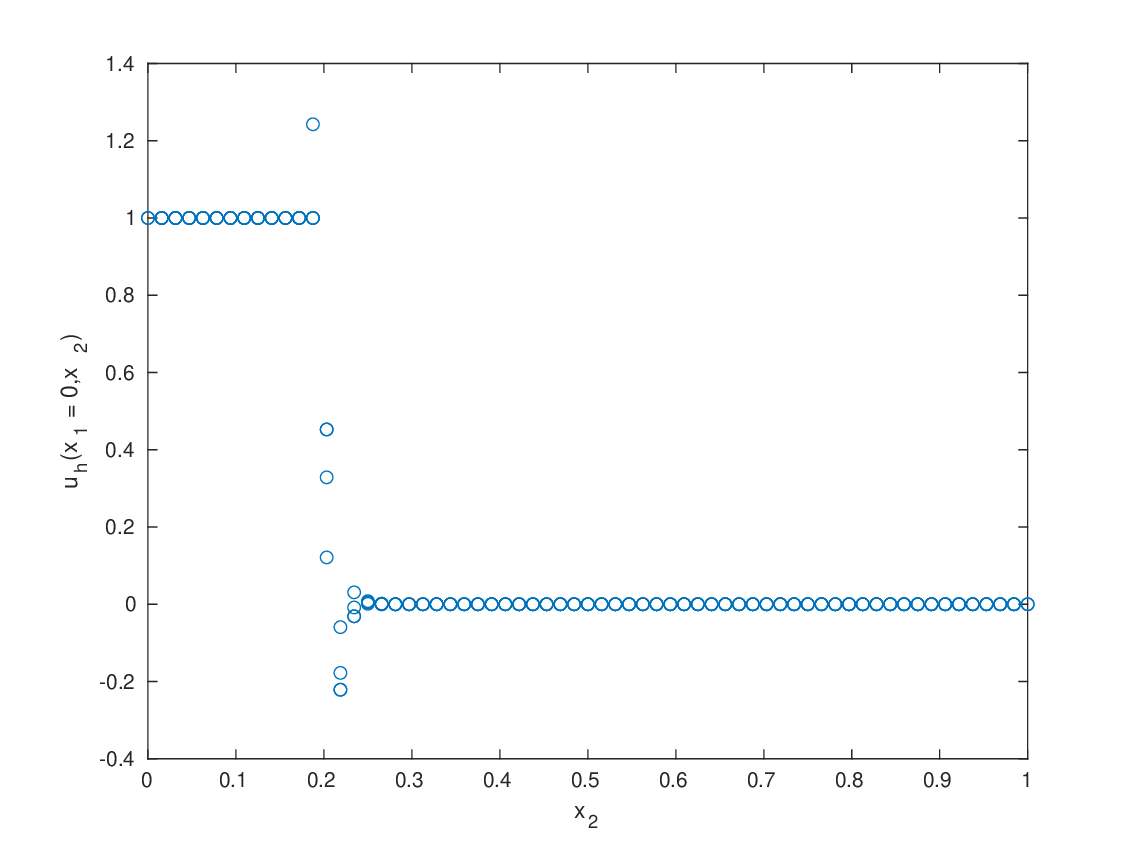}
\end{minipage}
\end{figure}

Figure \ref{fig:inl} shows that the presence of an internal layer in the approximate solution. Although the internal layer is captured by the approximate solution, there is small overshooting/undershooting along the internal layer. This is emphasized in the picture on the right in Figure \ref{ex:inl} where the profile of $u_h$ is plotted on $\{x_1:x_1=0\} \times \{x_2: x_2 \in [0,1]\}$. For comparison, we show the numerical solution for $\eps=10^{-3}$ in Figure \ref{fig:inle}. The behavior of our numerical methods is similar to standard DG methods (cf. \cite{ayuso2009discontinuous}). For example, there are wiggles near the outflow boundary in the intermediate regime and the boundary layer is ignored on the outflow boundary.

\begin{figure}[H]
    \centering
    \caption{Results of Example \ref{ex:inl}: $u_h$ (left) and the profile of $u_h$ at $x_1=0$ (right) for $\eps = 10^{-3}$ and $h = \sfrac{1}{128}$.}
    \label{fig:inle}
    \begin{minipage}{.365\textwidth}
    \hspace{-0.7in}
    \includegraphics[width=1.30\linewidth]{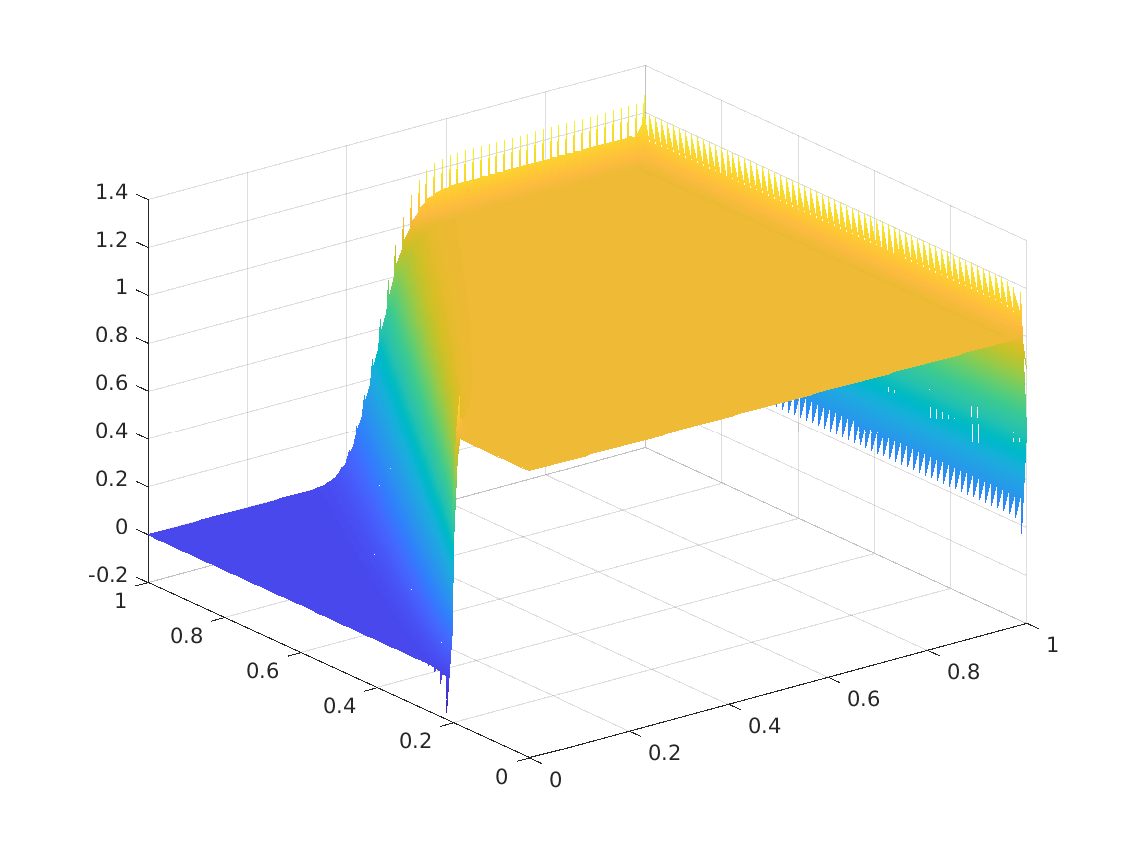}
    \end{minipage}%
    \begin{minipage}{.365\textwidth}
    \centering
    \includegraphics[width=1.30\linewidth]{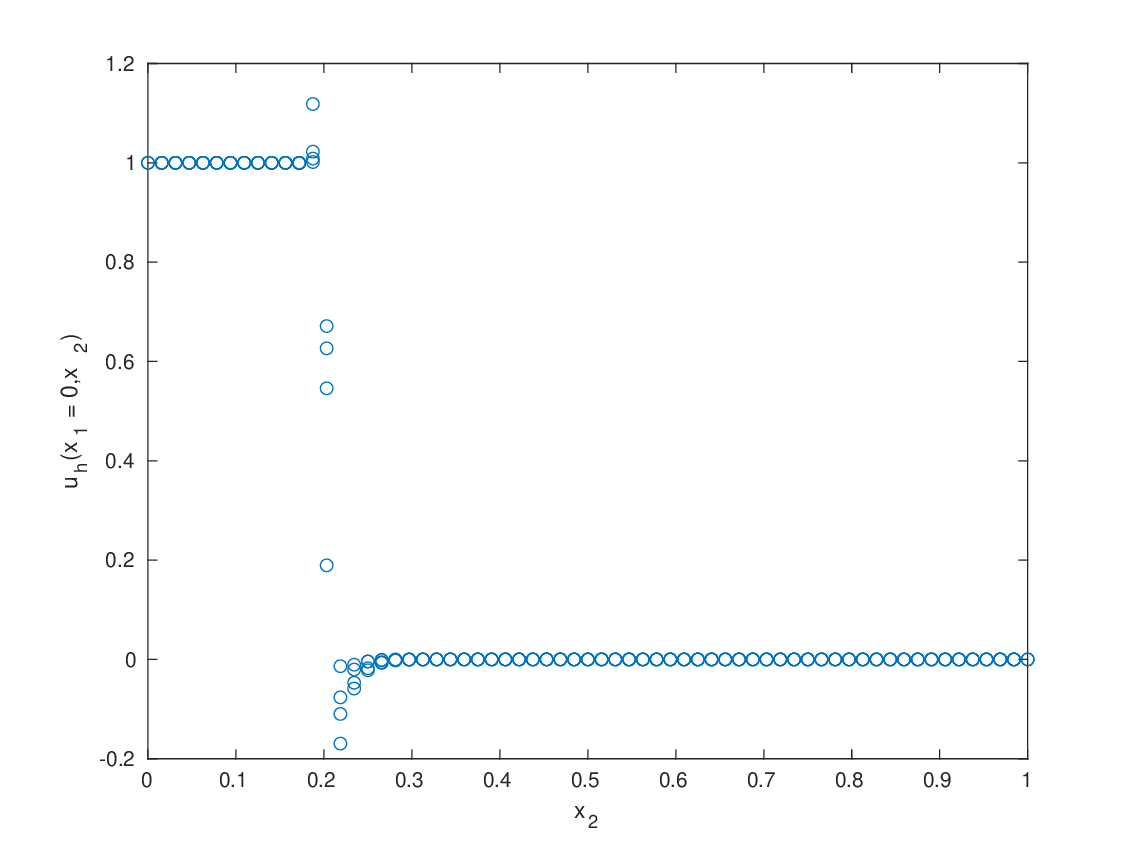}
    \end{minipage}
\end{figure}

\begin{example}[Interior Layer \cite{leykekhman2012local}]\label{ex:inl1}
    In this example, we take $\O = [0,1]^2$, $\gamma = 0$, $\bm{\zeta} = [1, 0]^t$, and $\eps = 10^{-9}.$ The exact solution is
\begin{align}\label{eq:inlexact}
    u(x_1,x_2) = (1-x_1)^3 \arctan\left(\dfrac{x_2 - 0.5}{\eps}\right).
\end{align}
\end{example}

\begin{figure}[H]
    \centering
    \caption{Results of Example \ref{ex:inl1}: $u_h$ (left) and $u$ (right) for $\eps = 10^{-9}$ and $h = \sfrac{1}{128}$.}
    \label{fig:inl1}
    \begin{minipage}{.365\textwidth}
    \hspace{-0.7in}
      \includegraphics[width=1.30\linewidth]{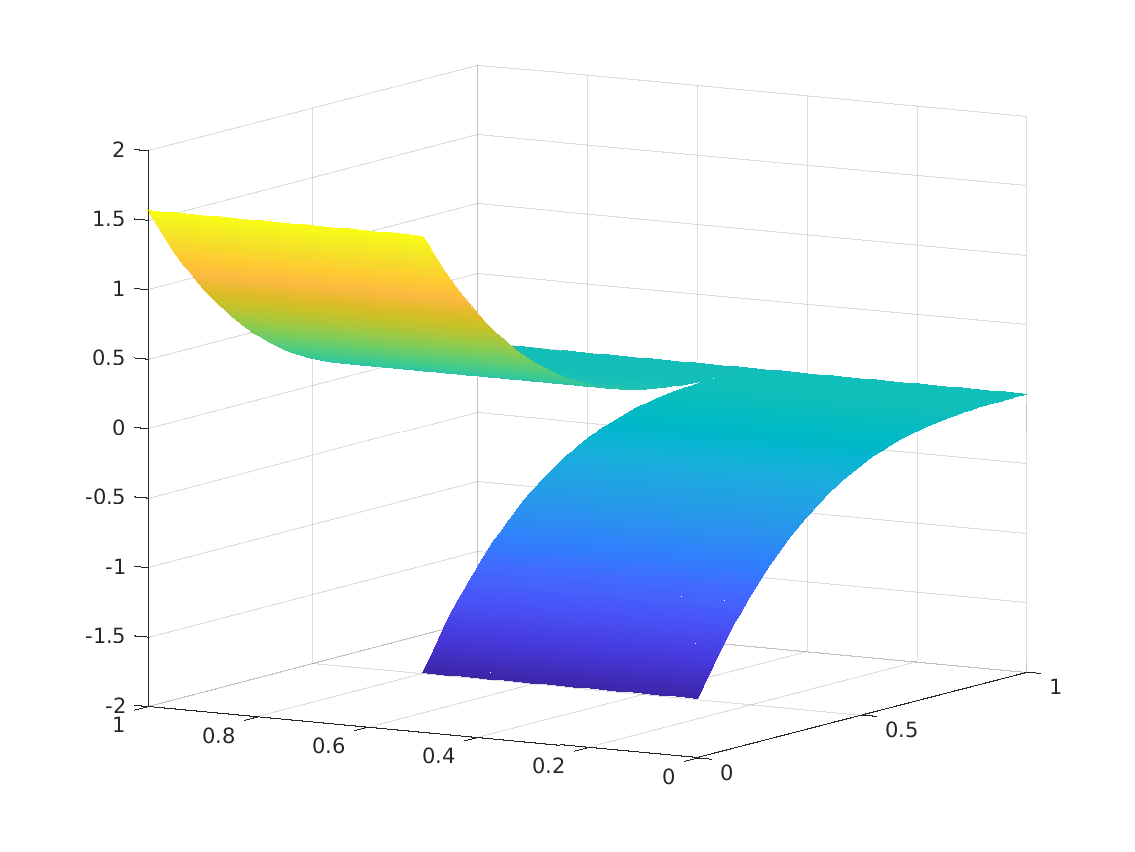}
    \end{minipage}%
    \begin{minipage}{.365\textwidth}
    \centering
      \includegraphics[width=1.30\linewidth]{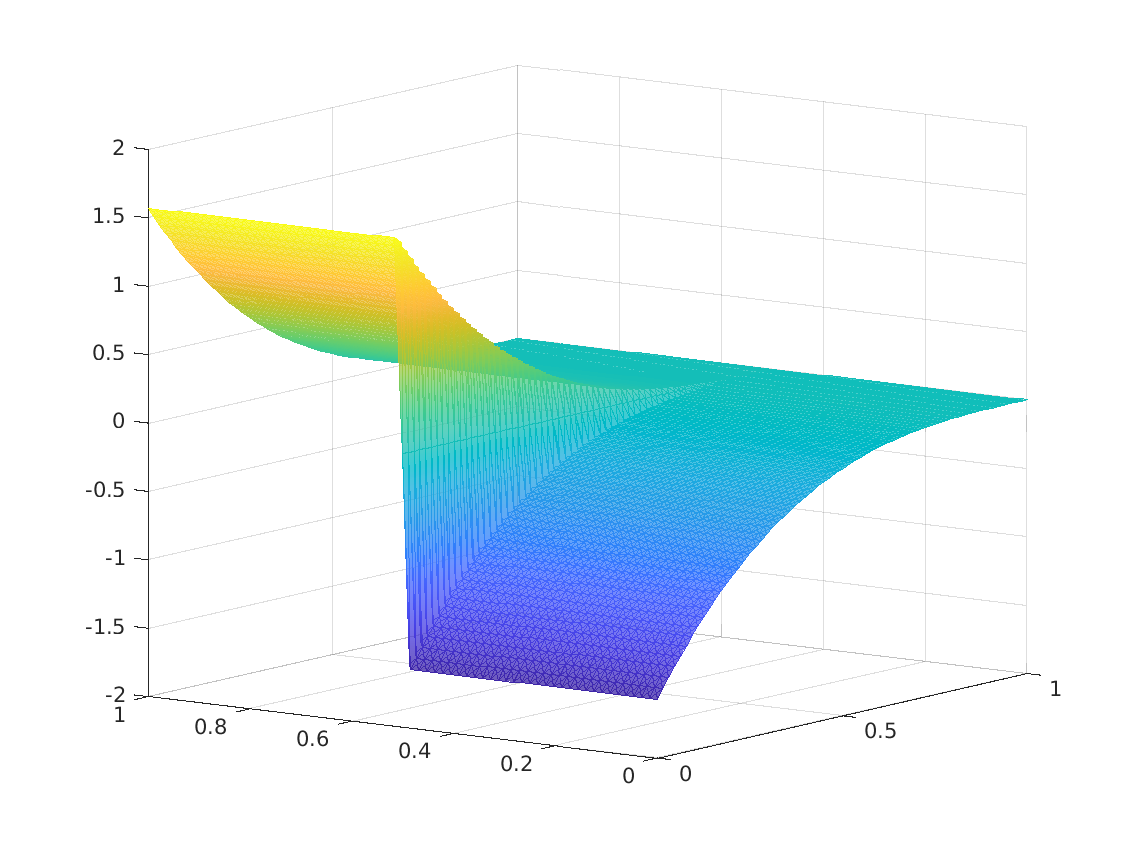}
    \end{minipage}
\end{figure}

As one can see from Figure \ref{fig:inl1}, the exact solution $u$ has an internal layer along $x_2 = 0.5.$ It is also clear that the numerical solution does not resolve the interior layer (cf. \cite{leykekhman2012local}).

In Table \ref{Table:inl1local}, we compute the local convergence in the $L_2$ norm and the $\|\cdot\|_h$ norm. We again observe $O(h^2)$ convergence  in the $L_2$ norm and $O(h^\frac32)$ convergence in the $\|\cdot\|_h$ and $\|\cdot\|_h\sharp$ norms. One can see that the convergence rates are optimal in the region where the solution is smooth. This indicates that the interior layer does not pollute the solution into the region that stays away from the interior layer.

\begin{table}[H]
    \centering
    \begin{tabular}{|c|c|cc|cc|cc|}
         \hline
        \multicolumn{1}{|c|}{}  
        & \multicolumn{1}{c|}{}
        & \multicolumn{2}{c|}{$L_2$}
        & \multicolumn{2}{c|}{$||\cdot||_h$}
        & \multicolumn{2}{c|}{$||\cdot||_{h\sharp}$}\\
        & $h$ & Error & Rate & Error & Rate & Error & Rate \\ 
        \hline
        \multirow{5}{*}{$\sigma_e=0$} 
        & 1/8  & 9.57e-04 & -- & 8.16e-03 & -- &  5.78e-02 & --      \\
        & 1/16 & 2.42e-04 & 1.98 & 2.88e-03 & 1.50 &  2.04e-02 & 1.49      \\
        & 1/32 & 6.10e-05 & 1.99 & 1.02e-03 & 1.50 &  7.23e-03 & 1.50      \\
        & 1/64 & 1.53e-05 & 2.00 & 3.59e-04 & 1.50 &  2.55e-03  & 1.50      \\
        \hline
        \hline
            \multirow{5}{*}{$\sigma_e=5$} 
        & 1/8  & 9.57e-04 & -- & 8.16e-03 & -- &  5.78e-02 & --      \\
        & 1/16 & 2.42e-04 & 1.98 & 2.88e-03 & 1.50 &  2.04e-02 & 1.49      \\
        & 1/32 & 6.10e-05 & 1.99 & 1.02e-03 & 1.50 &  7.23e-03 & 1.50      \\
        & 1/64 & 1.53e-05 & 2.00 & 3.59e-04 & 1.50 &  2.55e-03  & 1.50      \\
        \hline
    \end{tabular}
    \caption{Errors and rates of convergence for $u_h$ for Example \ref{ex:inl1} in the subdomain $[0,1] \times [0.625,1]$ (away from the interior layer) when $\eps = 10^{-9}$.}
    \label{Table:inl1local}
\end{table}

For comparison, we show the global convergence rates in Table \ref{Table:inl1global}. We see that the convergence rates deteriorate when $h$ is small due to the interior layer. We also illustrate the behavior of our numerical methods in Figures \ref{fig:inl11} and \ref{fig:inl12} when $\eps=10^{-3}$ and $\eps=1$. We can clearly see our methods capture the interior layer when $\eps$ increases. 

\section{Concluding Remarks}

In this paper we developed and analyzed numerical approximations based on the DG finite element differential calculus framework for a convection-diffusion-reaction equation. We proved that the proposed methods have optimal convergence behaviors in the convection-dominated regime. As a byproduct, we also showed that the method for the reduced convection-reaction problem is equivalent to a centered fluxes DG method. Numerically, we also observed that our methods have optimal convergence rates in the interior of the domain which are away from the boundary layers and interior layers. An interesting problem is to extend our methods to an optimal control problem that is constrained by a convection-dominated equation (cf. \cite{liu2024discontinuous,liu2024robust}). This is being investigated in an ongoing project.

\begin{table}[H]
    \centering
    \begin{tabular}{|c|c|cc|cc|}
         \hline
        \multicolumn{1}{|c|}{}  
        & \multicolumn{1}{c|}{}
        & \multicolumn{2}{c|}{$L_2$}
        & \multicolumn{2}{c|}{$||\cdot||_h$} \\
        & $h$ & Error & Rate & Error & Rate  \\ 
        \hline
        \multirow{7}{*}{$\sigma_e=0$} 
        & 1/4 & 6.06e-03 & - & 3.77e-02 & -      \\
        & 1/8 & 1.56e-04 & 1.96 & 1.33e-02 & 1.50       \\
        & 1/16 & 3.96e-04 & 1.98 & 4.73e-03 & 1.49  \\
        & 1/32 & 9.97e-05 & 1.99 & 1.82e-03 & 1.38      \\
        & 1/64 & 2.72e-05 & 1.87 & 1.19e-03 & 0.60   \\
        \hline
        \hline
        \multicolumn{1}{|c|}{}  
        & \multicolumn{1}{c|}{}
        & \multicolumn{2}{c|}{$L_2$}
        & \multicolumn{2}{c|}{$||\cdot||_h$} \\
        & $h$ & Error & Rate & Error & Rate  \\ 
        \hline
        \multirow{7}{*}{$\sigma_e=5$} 
       & 1/4 & 6.06e-03 & - & 3.77e-02 & -     \\
        & 1/8 & 1.56e-04 & 1.96 & 1.33e-02 & 1.50       \\
        & 1/16 & 3.96e-04 & 1.98 & 4.74e-03 & 1.49  \\
        & 1/32 & 9.97e-05 & 1.99 & 1.88e-03 & 1.33      \\
        & 1/64 & 2.95e-05 & 1.76 & 1.37e-03 & 0.45   \\
        \hline
    \end{tabular}
    \caption{Errors and rates of convergence for Example \ref{ex:inl1} on $\O = [0,1]^2$ for $\eps = 10^{-9}$.}
    \label{Table:inl1global}
\end{table}

\begin{figure}[H]
    \centering
    \caption{Results of Example \ref{ex:inl1}: $u_h$ (left) and $u$ (right) for $\eps = 10^{-3}$ and $h = \sfrac{1}{128}$.}
    \label{fig:inl11}
    \begin{minipage}{.365\textwidth}
    \hspace{-0.7in}
    \includegraphics[width=1.30\linewidth]{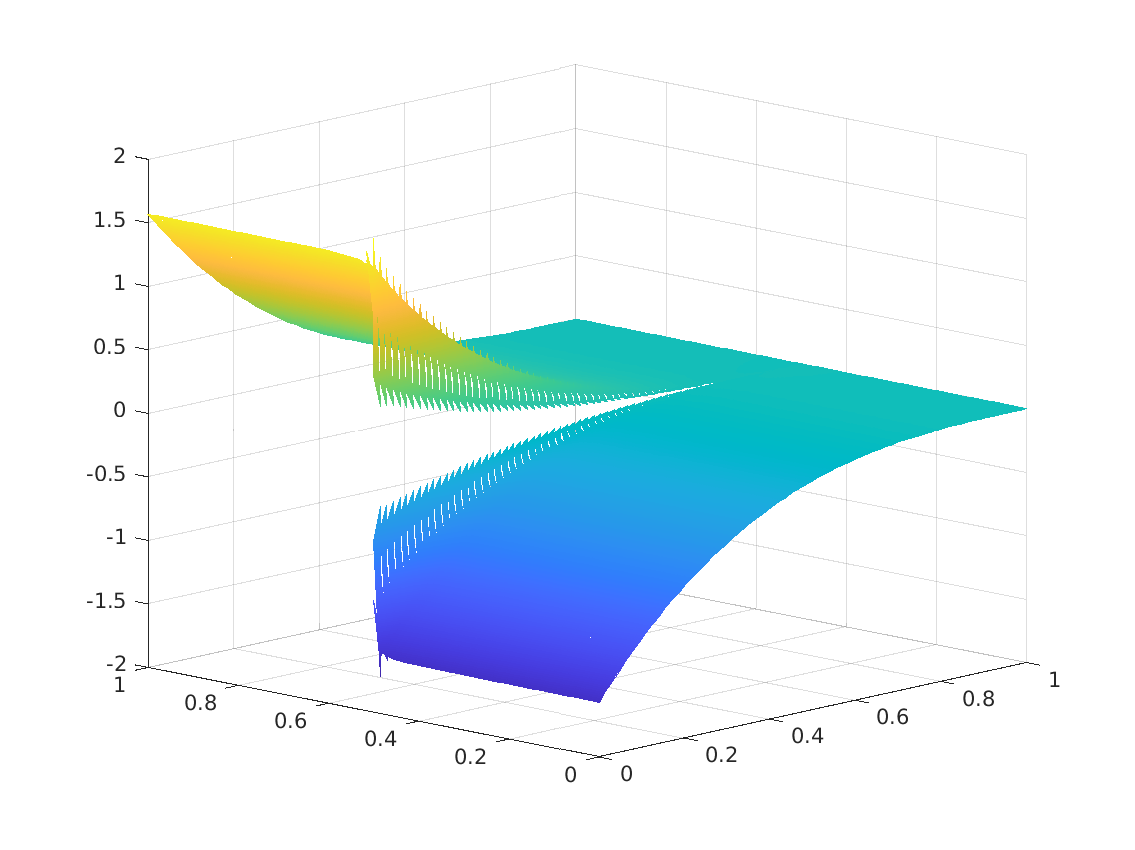}
    \end{minipage}%
    \begin{minipage}{.365\textwidth}
    \centering
    \includegraphics[width=1.30\linewidth]{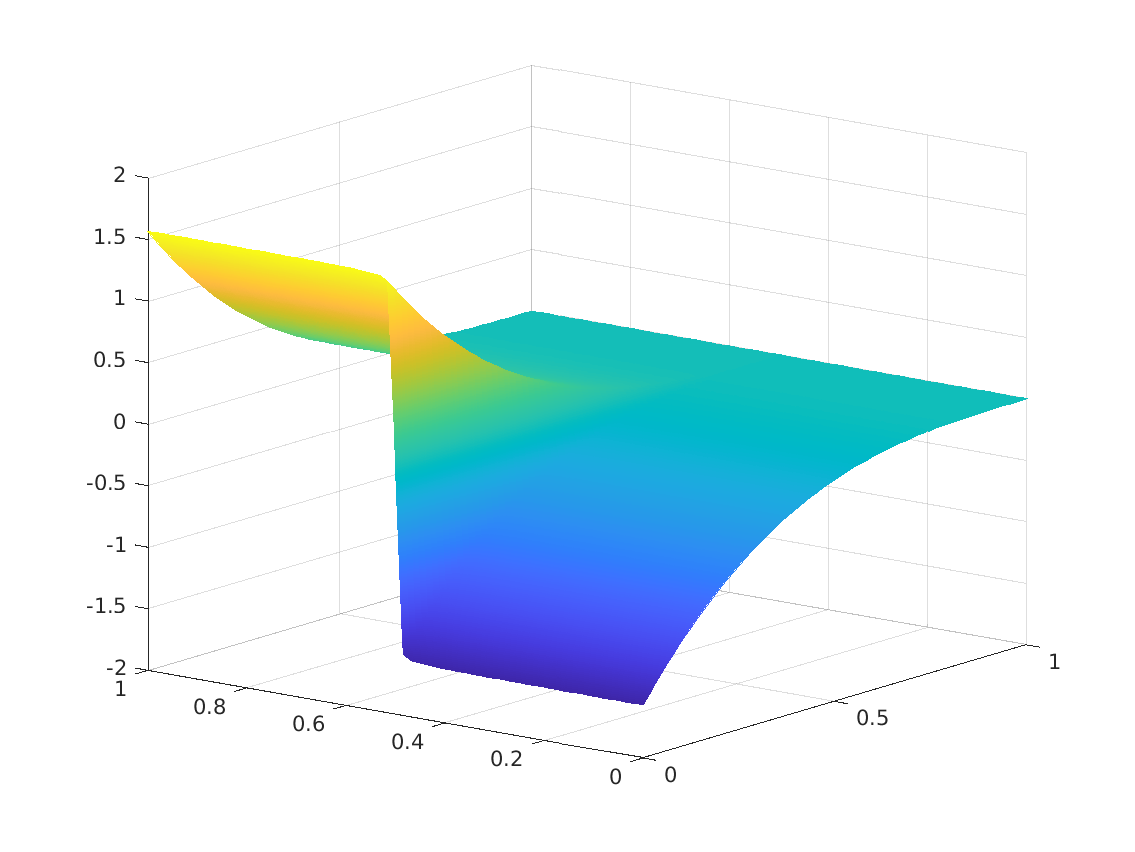}
    \end{minipage}
\end{figure}

\section*{Acknowledgement}
This material is based upon work supported by the National Science Foundation under Grants No. DMS-2111059, DMS-2111004, and DMS-1929284 while the third author was in residence at the Institute for Computational and Experimental Research in Mathematics in Providence, RI, during the "Numerical PDEs: Analysis, Algorithms, and Data Challenges" program.

\begin{figure}[H]
    \centering
    \caption{Results of Example \ref{ex:inl1}: $u_h$ (left) and $u$ (right) for $\eps = 1$ and $h = \sfrac{1}{128}$.}
    \label{fig:inl12}
    \begin{minipage}{.365\textwidth}
    \hspace{-0.7in}
    \includegraphics[width=1.30\linewidth]{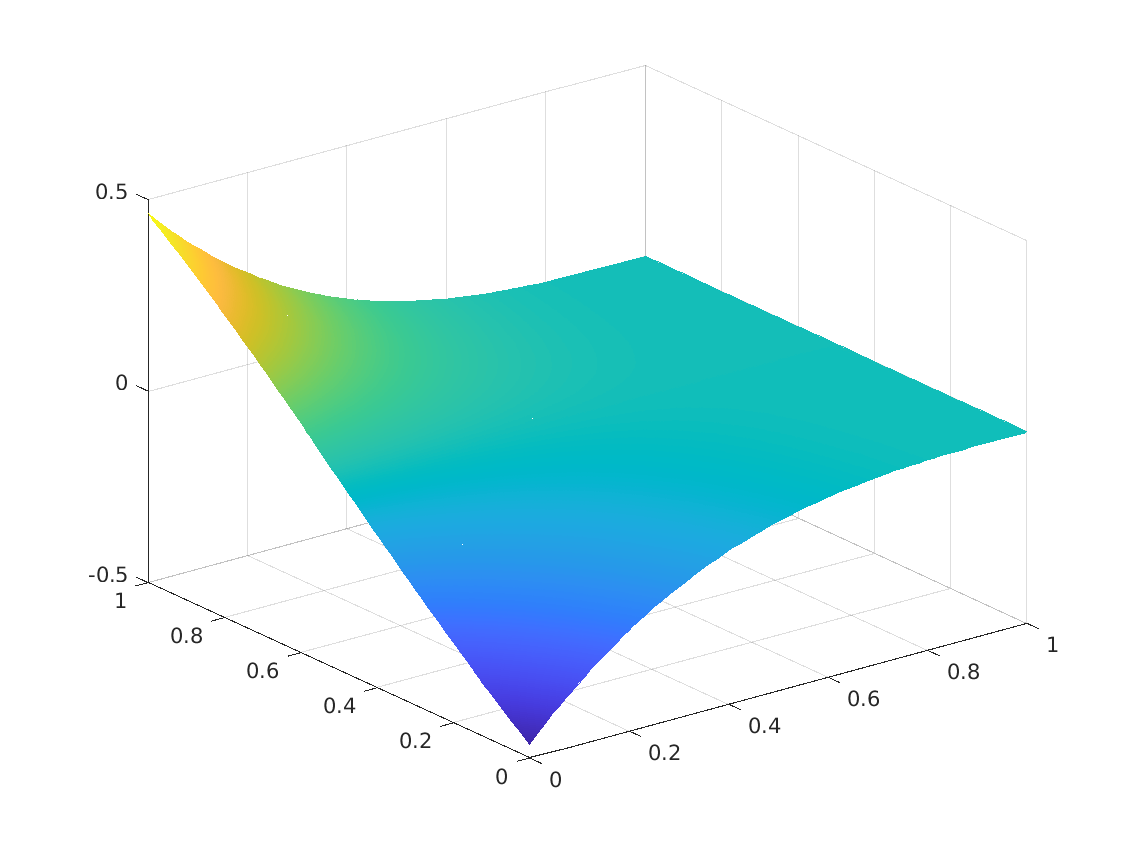}
    \end{minipage}%
    \begin{minipage}{.365\textwidth}
    \centering
    \includegraphics[width=1.30\linewidth]{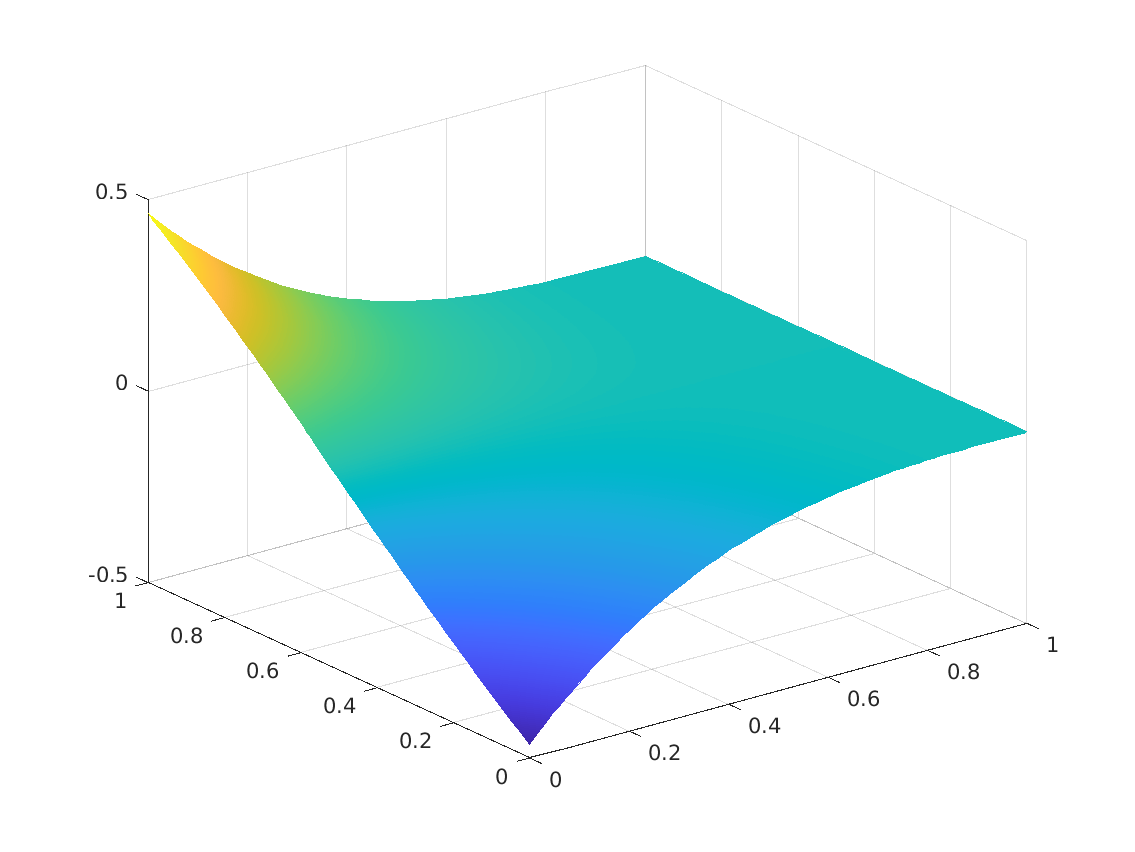}
    \end{minipage}
\end{figure}

\appendix

\section{Proof of Lemma \ref{lem:infsupconvah}}\label{apx:infsup}

 \begin{proof}[Proof of Lemma \ref{lem:infsupconvah}]
    We follow the approaches in \cite{di2011mathematical,ern2006discontinuous}. Let $\mathcal{S}=\sup_{w_h\in V_h\setminus\{0\}}\frac{a_h(v_h,w_h)}{\|w_h\|_{h\sharp}}$. Given any $v_h\in V_h$, we construct a particular $w_h\in V_h\setminus\{0\}$ such that, for all $T\in\cT_h$, $w_h|_T=h_T\l\bm{\zeta}\r_T\cdot\nabla v_h$, where $\l\bm{\zeta}\r_T$ denotes the mean value of $\bm{\zeta}$ over $T$. We first notice that, by \eqref{eq:ahcoer},
    \begin{equation}\label{eq:ahcoercon}
        C\|v_h\|_h^2\le a_h(v_h,v_h)=\frac{a_h(v_h,v_h)}{\|v_h\|_{h\sharp}}\|v_h\|_{h\sharp}\le \mathcal{S}\|v_h\|_{h\sharp}.
    \end{equation}
    We claim that
    \begin{equation}\label{eq:ahinfsupex1}
        \sum_{T\in\cT_h}h_T\|\bm{\zeta}\cdot\nabla v_h\|^2_{L_2(T)}\lesssim \mathcal{S}\|v_h\|_{h\sharp}+\|v_h\|_h\|v_h\|_{h\sharp}+\|v_h\|_h^2.
    \end{equation}
    Combining \eqref{eq:ahinfsupex1} and \eqref{eq:ahcoercon} and using \eqref{eq:ahcoercon} again, we have
    \begin{equation}
        C\|v_h\|^2_{h\sharp}\lesssim \mathcal{S}\|v_h\|_{h\sharp}+\|v_h\|_h\|v_h\|_{h\sharp}.
    \end{equation}
    Upon using Young's inequality and iterating the inequality \eqref{eq:ahcoercon} once again, we have
    \begin{equation}
        C\|v_h\|^2_{h\sharp}\lesssim \mathcal{S}\|v_h\|_{h\sharp}
    \end{equation}
    which leads to \eqref{eq:disinfsuph}. The rest of the proof is devoted to \eqref{eq:ahinfsupex1}. We first prove the estimate
    \begin{equation}\label{eq:whvh}
        \|w_h\|_{h\sharp}\lesssim\|v_h\|_{h\sharp}.
    \end{equation}
    Indeed, it follows from \eqref{eq:upwnormstrongah} that
    \begin{equation}
    \begin{aligned}
        \|w_h\|^2_{h\sharp}&=\eps\|w_h\|_d^2+\|w_h\|_{upw}^2+\sum_{T\in\cT_h}h_T\|\bm{\zeta}\cdot\nabla w_h\|^2_{L_2(T)}.
    \end{aligned}
    \end{equation}
    A standard inverse inequality implies
    \begin{equation}\label{eq:infsupineqinv}
        \sum_{T\in\cT_h}h_T\|\bm{\zeta}\cdot\nabla w_h\|^2_{L_2(T)}\lesssim \sum_{T\in\cT_h}h_T^{-1}\|w_h\|^2_{L_2(T)}
    \end{equation}
    and, together with a trace inequality 
    \begin{equation}\label{eq:infsupineq}
    \begin{aligned}
        \|w_h\|^2_{upw}&=\|w_h\|^2_{\LT}+\int_{\partial\O} \frac12|\bm{\zeta}\cdot\bn|w_h^2\ \!ds+\sum_{e\in\cE_h^I}\int_e \frac12|\bm{\zeta}\cdot\bn|[w_h]^2\ \!ds\\
        &\lesssim \|v_h\|_\LT^2+\sum_{T\in\cT_h}h_T^{-1}\|w_h\|^2_{L_2(T)}.
    \end{aligned}
    \end{equation}
    We also have $\|w_h\|_d^2\lesssim \|v_h\|_d^2$. In fact, we have, if $\sigma_{min}:=\min_{e\in \cE_h}\sigma_e>0$,
    \begin{equation}\label{eq:penaltynormeq}
    \begin{aligned}
        \sum_{e\in\cE_h}\frac{\sigma_e}{h_e}\|[w_h]\|_{L_2(e)}^2\le C\sum_{T\in\cT_h}\|\nabla v_h\|_{L_2(T)}^2\le C(1+\frac{1}{\sigma_{min}})\|v_h\|_d^2,
    \end{aligned}
    \end{equation} 
    where we use a standard trace inequality and \cite[Lemma 4.1]{lewis2020convergence}.
    It also follows from \cite[Lemma 4.1]{lewis2020convergence} and a trace inequality that, for $\sigma_e\ge0$,  
    \begin{equation}\label{eq:avehnormeq}
    \begin{aligned}
        &\frac12(\|\nabla^+_{h,0}w_h\|_{\LT}^2+\|\nabla^-_{h,0}w_h\|_{\LT}^2)\\
        \lesssim& \sum_{T\in\cT_h}\|\nabla w_h\|^2_{L_2(T)}+\sum_{e\in\cE_h}\frac{1}{h_e}\|[w_h]\|_{L_2(e)}^2\\
        \lesssim&\|v_h\|_d^2.
    \end{aligned}
    \end{equation}
    The estimates \eqref{eq:penaltynormeq} and \eqref{eq:avehnormeq} then imply $\|w_h\|_d^2\lesssim \|v_h\|_d^2$.

    At last, it is known that \cite{di2011mathematical}
    \begin{equation}\label{eq:infsupineqs}
        \sum_{T\in\cT_h}h_T^{-1}\|w_h\|^2_{L_2(T)}\lesssim \|v_h\|^2_{h\sharp}.
    \end{equation}
    The estimate \eqref{eq:whvh} is immediate upon combining \eqref{eq:infsupineqinv}-\eqref{eq:infsupineqs}.

    It follows from \eqref{eq:dwdgdfbilinear}, \eqref{eq:cf}, and \eqref{eq:dwdgstable} that
    \begin{equation}\label{eq:infsuphtnorm}
    \begin{aligned}
        \sum_{T\in\cT_h}h_T\|\bm{\zeta}\cdot\nabla v_h\|^2_{L_2(T)}&=a_h(v_h,w_h)-\eps a_h^{d}(v_h,w_h)\\
        &\quad+(\bm{\zeta}\cdot\nabla v_h,h_T(\bm{\zeta}-\l\bm{\zeta}\r_T)\cdot\nabla v_h)_{\cT_h}\\
        &\quad+\l \bm{\zeta}\cdot\bn[v_h],\{w_h\}\r_{\cE^I_h}-(\gamma v_h,w_h)_\LT\\
        &\quad-\int_{\partial\O^-}|\bm{\zeta}\cdot\bn| v_h\ \!w_h\ \! ds-\l \frac12|\bm{\zeta}\cdot\bn|[v_h],[w_h]\r_{\cE^I_h}\\
        &=T_1+T_2\cdots+T_7.
    \end{aligned}
    \end{equation}
    For the first two terms, we have, by \eqref{eq:whvh} and \eqref{eq:dwdgbounded},
    \begin{equation}\label{eq:T1}
        |T_1|=|\frac{a_h(v_h,w_h)}{\|w_h\|_{h\sharp}}\|w_h\|_{h\sharp}|\le \mathcal{S}\|w_h\|_{h\sharp}\lesssim \mathcal{S}\|v_h\|_{h\sharp},
    \end{equation}
    \begin{equation}\label{eq:T2}   
        |T_2|\lesssim \eps^\frac12\|v_h\|_d\eps^\frac12\|w_h\|_d\lesssim \|v_h\|_h\|v_h\|_{h\sharp}.
    \end{equation}
    It follows from Cauchy-Schwarz inequality and \eqref{eq:whvh} that
    \begin{equation}\label{eq:T567}
        |T_5|+|T_6|+|T_7|\lesssim\|v_h\|_{h}\|w_h\|_{h\sharp}\lesssim\|v_h\|_{h}\|v_h\|_{h\sharp}.
    \end{equation}
    To bound $T_4$, we have, by a standard trace inequality, \eqref{eq:infsupineqs}, and \eqref{eq:whvh},
    \begin{equation}\label{eq:T_4}
    \begin{aligned}
        &\l\bm{\zeta}\cdot\bn[v_h],\{w_h\}\r_{\cE_h^I}&\\
        &\le\left(\sum_{e\in\cE_h^I}\int_e \frac12|\bm{\zeta}\cdot\bn|[v_h]^2\ \!ds\right)^\frac12\left(\sum_{e\in\cE_h^I}\int_e 2|\bm{\zeta}\cdot\bn|\{w_h\}^2\ \!ds\right)^\frac12\\
        &\lesssim\|v_h\|_{h}\left(\sum_{T\in\cT_h}h_T^{-1}\|w_h\|_{L_2(T)}^2\right)^\frac12\lesssim\|v_h\|_h\|v_h\|_{h\sharp}.
    \end{aligned}
    \end{equation}
    Finally, we bound $T_3$ as follows,
    \begin{equation}\label{eq:T3}
        \begin{aligned}
            (\bm{\zeta}\cdot\nabla &v_h,h_T(\bm{\zeta}-\l\bm{\zeta}\r_T)\cdot\nabla v_h)_{\cT_h}\\
            &\lesssim\left(\sum_{T\in\cT_h}h_T\|\bm{\zeta}\cdot\nabla v_h\|_{L_2(T)}^2\right)^\frac12\left(\sum_{T\in\cT_h}h_T\|v_h\|_{L_2(T)}^2\right)^\frac12\\
            &\lesssim\left(\sum_{T\in\cT_h}h_T\|\bm{\zeta}\cdot\nabla v_h\|_{L_2(T)}^2\right)^\frac12\|v_h\|_{h\sharp}\\
            &\lesssim\frac12\sum_{T\in\cT_h}h_T\|\bm{\zeta}\cdot\nabla v_h\|_{L_2(T)}^2+C\|v_h\|^2_{h\sharp},
        \end{aligned}
    \end{equation}
    where we use an inverse inequality and Young's inequality. We also use the fact $\bm{\zeta}\in [W^{1,\infty}(\Omega)]^2$, and, hence, $\|\bm{\zeta}-\l\bm{\zeta}\r_T\|_{L^\infty(T)}\lesssim h_T$.
    The claimed estimate \eqref{eq:ahinfsupex1} follows from \eqref{eq:infsuphtnorm}-\eqref{eq:T3}.
 \end{proof}

\bibliographystyle{plain}
\bibliography{references}
 
\end{document}